 \newtheorem{thm}{Theorem}[section]
 \newtheorem{cor}[thm]{Corollary}
 \newtheorem{prop}[thm]{Proposition}
 \newtheorem{rem}[thm]{Remark}
 \DeclareMathOperator{\supp}{supp}
\DeclareMathOperator{\Arccos}{Arccos}
\newcommand{\Int}{\int_{-\infty}^\infty}
\newcommand{\om}{\omega}
\renewcommand{\a}{\alpha}
\renewcommand{\d}{\delta}
\renewcommand{\span}{span}
\def\C{\mathbb{C}}
\def\R{\mathbb{R}}
\def\S{\mathbb{S}}
\def\Z{\mathbb{Z}}
\author{Christian Berg}
\title{A  unified view of space-time covariance functions through Gelfand pairs}
\date{1.10.2020}
\begin{document}
\maketitle

\begin{abstract} We give a characterization of positive definite integrable functions on a product of two Gelfand pairs as an integral of positive definite functions on one of the Gelfand pairs with respect to the Plancherel measure on the dual of the other Gelfand pair. 

In the very special case where the Gelfand pairs are Euclidean groups and the compact subgroups are reduced to the identity, the characterization is a much cited result  in spatio-temporal statistics due to Cressie, Huang and Gneiting.

 When one of the Gelfand pairs is compact the characterization leads to  results about
expansions in spherical functions with positive definite expansion functions, thereby recovering recent results of the author in collaboration with Peron and Porcu. In the special case when the compact Gelfand pair consists of orthogonal groups, the characterization is important in geostatistics and covers a recent result of Porcu and White.   
\end{abstract}

 2020 MSC: 43A25, 43A35, 43A75

{\bf Keywords}: Positive definite functions, covariance functions, harmonic analysis on Gelfand pairs.

\section{Introduction}
In connection with analysis of spatiotemporal datasets it has been valuable to have knowledge about covariance functions on $\R^d\times\R$ or $\S^d\times\R$, where $\S^d$ denotes the unit sphere in $\R^{d+1}$, and $\R$ is a model for time. During the last 30 years there has been numerous papers in this area as witnessed e.g. by the recent survey paper by Porcu, Furrer and Nychka \cite{P:F:N} with more than 200 citations.

Covariance functions are positive definite and conversely, by a Theorem of
Kolmogorov, every positive definite function is the covariance function of some Gaussian random field. For an introduction  to these concepts see e.g. \cite{M:P}.

In \cite{C:H} Cressie and Huang considered continuous positive definite functions $C:\R^d\times\R\to\R$ and proposed a way to produce such functions, see Proposition~\ref{thm:CH}. 

Berg and Porcu \cite{B:P} extended the seminal work of  Schoenberg  
\cite{Sc} to expansions of positive definite functions on $\S^d\times \R$, but realized that the same  type of result holds, if the temporal space $\R$ is replaced by an arbitrary locally compact group $L$. Later Berg, Peron and Porcu \cite{B:P:P} showed how to extend the framework further from products $\S^d\times L$ to products of compact Gelfand pairs and locally compact groups $L$. The sphere $\S^d$ appears as the homogeneous space $O(d+1)/O(d)$, where $O(d)$ is the orthogonal group  in $\R^d$. The pair of groups $(O(d+1), O(d))$ is an example of a compact Gelfand pair. In a recent paper \cite{P:C:G:W:A} the authors consider covariance functions on products $\S^{d_1}\times \S^{d_2}$ of spheres. All these results will be covered by our treatment of harmonic analysis on products of Gelfand pairs.

Let us motivate the results to follow by outlining the main idea of \cite{C:H} and following their terminology.

Consider a correlation function $C:\R^d\times\R\to\R$ given as
\begin{equation}\label{eq:CH1}
C(\mathbf{h};u)=\int\int e^{i\boldsymbol{h'\omega}+iu\tau}g(\boldsymbol{\omega};\tau)d\boldsymbol{\omega}\,d\tau,
\end{equation}
where $g$ is a continuous probability density on $\R^d\times\R$. In \eqref{eq:CH1} and below $\boldsymbol{h,\omega}$ are column vectors in $\R^d$ and $\boldsymbol{h'\omega}$ is their scalar product.

Under the assumption that $C$ is integrable, the authors   of \cite{C:H} write
\begin{equation}\label{eq:CH2}
g(\boldsymbol{\omega};\tau)=(2\pi)^{-d-1}\int\int  e^{-i\boldsymbol{h'\omega}-iu\tau}C(\mathbf{h};u)d\mathbf{h}\,du=(2\pi)^{-1}\int e^{-iu\tau}h(\boldsymbol{\omega};u)\,du,
\end{equation}
where
\begin{equation}\label{eq:CH3}
h(\boldsymbol{\omega};u)=(2\pi)^{-d}\int  e^{-i\boldsymbol{h'\omega}}C(\mathbf{h};u)d\mathbf{h}=\int e^{iu\tau}g(\boldsymbol{\omega};\tau)\,d\tau.
\end{equation}
Here the first equality in \eqref{eq:CH2} follows from the Inversion Theorem for Fourier integrals on $\R^{d+1}$, and the second equality is a consequence of Fubini's Theorem, but has to be used with some care, because the function $h$ is only defined for almost all $u\in\R$, since the function
$C(\cdot;u)$ is only Lebesgue integrable on $\R^d$ for $u\in\R$ outside a Lebesgue null set. 

The second equality in \eqref{eq:CH3} comes from the Inversion Theorem for Fourier integrals on $\R$ again without making the assumptions precise for the Inversion Theorem to hold.
 
Cressie and Huang then assume that $h$ can be written
\begin{equation}\label{eq:CH4}
h(\boldsymbol{\omega};u)=\rho(\boldsymbol{\omega};u)k(\boldsymbol{\omega}) 
\end{equation}
with the following assumptions

(C1) For each $\boldsymbol{\omega}\in\R^d, \rho(\boldsymbol{\omega};\cdot)$ is a continuous correlation function such that  $\int|\rho(\boldsymbol{\omega};u)|\,du<\infty$  and $k(\boldsymbol{\omega})>0$.

(C2) $\int k(\boldsymbol{\omega})\,d\boldsymbol{\omega}<\infty$.

Note that $k(\boldsymbol{\omega})=h(\boldsymbol{\omega};0)$.
The result of Cressie and Huang can be stated as follows, where we have reformulated the two conditions without the factor $k(\boldsymbol{\omega})$ and using the terminology of Section 2:
 
\begin{prop}\label{thm:CH} Assume that $h:\R^d\times \R\to\C$ is a continuous function satisfying 

(C1') For each $\boldsymbol{\omega}\in\R^d, h(\boldsymbol{\omega};\cdot)\in\mathcal P(\R)\cap L^1(\R)$.  

(C2') $\int_{\R^d} h(\boldsymbol{\omega};0)\,d\boldsymbol{\omega}<\infty$.

 Then $C:\R^d\times \R\to\C$ defined by
\begin{equation}\label{eq:CH5}
C(\mathbf{h};u):=\int_{\R^d} e^{i\boldsymbol{h'\omega}} h(\boldsymbol{\omega};u)\,d\boldsymbol{\omega}
\end{equation}
belongs to $\mathcal P(\R^d\times\R)$.
\end{prop}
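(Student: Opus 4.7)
The plan is to verify (i) that $C$ is well-defined as an absolutely convergent integral, (ii) that $C$ is positive definite on $\R^d\times\R$, and optionally (iii) that $C$ is continuous, by reducing pointwise positive definiteness in $u$ of the integrand to the global positive definiteness in $(\mathbf{h},u)$.

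First I would exploit the classical bound $|\varphi(u)|\le \varphi(0)$ valid for any continuous positive definite function $\varphi$ on $\R$. Applied to $\varphi=h(\boldsymbol{\omega};\cdot)$ this gives $|h(\boldsymbol{\omega};u)|\le h(\boldsymbol{\omega};0)$ for all $\boldsymbol{\omega},u$, and then (C2') yields a uniform $L^1(\R^d)$ dominating function $h(\boldsymbol{\omega};0)$ for $\boldsymbol{\omega}\mapsto e^{i\mathbf{h}'\boldsymbol{\omega}}h(\boldsymbol{\omega};u)$. Hence the integral \eqref{eq:CH5} converges absolutely for every $(\mathbf{h},u)$, and the same domination together with continuity of $h$ and of the exponential, combined with Lebesgue's dominated convergence theorem, gives the continuity of $C$ on $\R^d\times\R$.

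The heart of the proof is positive definiteness. Fix finitely many points $(\mathbf{h}_1,u_1),\dots,(\mathbf{h}_N,u_N)\in\R^d\times\R$ and complex scalars $c_1,\dots,c_N$. Substituting the definition of $C$ and using Fubini (justified by the dominating function $\sum_{j,k}|c_j c_k|h(\boldsymbol{\omega};0)\in L^1(\R^d)$), one obtains
\begin{equation*}
\sum_{j,k} c_j\overline{c_k}\, C(\mathbf{h}_j-\mathbf{h}_k;u_j-u_k)
=\int_{\R^d}\sum_{j,k} a_j(\boldsymbol{\omega})\overline{a_k(\boldsymbol{\omega})}\, h(\boldsymbol{\omega};u_j-u_k)\,d\boldsymbol{\omega},
\end{equation*}
where $a_j(\boldsymbol{\omega}):=c_j e^{i\mathbf{h}_j'\boldsymbol{\omega}}$. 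By hypothesis (C1'), for each fixed $\boldsymbol{\omega}$ the function $u\mapsto h(\boldsymbol{\omega};u)$ lies in $\mathcal P(\R)$, so the inner double sum is nonnegative for every $\boldsymbol{\omega}\in\R^d$; integration preserves this, whence the left-hand side is $\ge 0$. This shows $C\in\mathcal P(\R^d\times\R)$.

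I do not anticipate a genuine obstacle here: the bound $|h(\boldsymbol{\omega};u)|\le h(\boldsymbol{\omega};0)$ handles integrability, Fubini handles the interchange, and pointwise positive definiteness in $u$ does the rest. The only place to be slightly careful is to justify the use of Fubini (absolute integrability of the double sum with the dominating function above) and to remark that positive definiteness in the sense of \eqref{eq:CH5} actually only requires that the quadratic forms be nonnegative, which is automatic from the pointwise $\boldsymbol{\omega}$-wise positivity combined with the linearity of the integral.
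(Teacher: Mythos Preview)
Your proof is correct and follows essentially the same approach as the paper: the paper establishes Proposition~\ref{thm:CH} as the special case $(G,K)=(\R^d,\{0\})$, $L=\R$ of Proposition~\ref{thm:CHgp}, whose proof uses exactly the bound $|h(\varphi,u)|\le h(\varphi,e_L)$ for well-definedness and the fact that each integrand $\varphi(x)h(\varphi,u)$ is positive definite on $G\times L$, then integrates. The only cosmetic difference is that the paper proves continuity via an $\varepsilon$--compact-set argument (needed in the nonmetrizable setting), whereas your dominated-convergence argument is the natural simplification for $\R^d\times\R$; note also that the $L^1(\R)$ part of (C1') is indeed unused, consistent with the paper's general version.
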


In Proposition~\ref{thm:CHgp} we give a far reaching generalization of the above Proposition. We stress that the functions $C\in\mathcal P(\R^d\times\R)$ given by \eqref{eq:CH5} are not necessarily integrable, see the Appendix, even though integrability 
of $C$ was a motivating assumption.

This result  by Cressie and Huang has been taken up and refined by several authors, see
\cite{Gn}, \cite{P:G:M}, \cite{A:G}.

The result of \cite[p. 598]{Gn} can be stated like this, where  we use the mathematical terminology "continuous positive definite function" instead of the statistical terminology "covariance function".

\begin{thm}\label{thm:Gn} Let $k, l$ be positive integers. A continuous, bounded, symmetric and integrable function $C:\R^k\times\R^l\to \R$ is positive definite if and only if 
\begin{equation}\label{eq:A2}
C_{\boldsymbol{\omega}}(\mathbf{u})=\int e^{-i\boldsymbol{h'\omega}} C(\mathbf{h};\mathbf{u}) d\mathbf{h},\quad \mathbf{u}\in\R^l
\end{equation} 
is a continuous positive definite function for almost all $\boldsymbol{\omega}\in\R^k$.
\end{thm}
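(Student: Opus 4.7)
The plan is to reduce Gneiting's characterization to the standard fact that a continuous, bounded, integrable function $F$ on a Euclidean space is positive definite if and only if its Fourier transform $\hat F$ is pointwise non-negative. Two applications of Fubini, which are legitimate because $C\in L^1$, set up the bridge. First, for every $\boldsymbol\omega$, the partial Fourier transform $C_{\boldsymbol\omega}(\mathbf u)=\int e^{-i\mathbf h'\boldsymbol\omega}C(\mathbf h;\mathbf u)\,d\mathbf h$ is defined for a.e.\ $\mathbf u$ and satisfies $\|C_{\boldsymbol\omega}\|_{L^1(\R^l)}\le\|C\|_{L^1(\R^k\times\R^l)}$. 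Second, the full Fourier transform factorises as
\[
\hat C(\boldsymbol\omega,\boldsymbol\tau)=\widehat{C_{\boldsymbol\omega}}(\boldsymbol\tau)
\]
for every $(\boldsymbol\omega,\boldsymbol\tau)$, with $\hat C$ jointly continuous on $\R^k\times\R^l$ since $C\in L^1$.

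Next I would establish the auxiliary fact used in both directions: for $F\in L^1\cap C_b$, $F$ is positive definite if and only if $\hat F\ge 0$ everywhere. The forward direction follows from Bochner's theorem, since for integrable $F$ the representing finite measure is absolutely continuous with density $(2\pi)^{-n}\hat F$. For the converse I would use Gaussian smoothing: testing $\hat F\ge 0$ against $e^{-\varepsilon|\xi|^2}$ via the Parseval-type identity $\int e^{-\varepsilon|\xi|^2}\hat F(\xi)\,d\xi=\int g_\varepsilon(x)F(x)\,dx$, where $g_\varepsilon$ is the (positive) Fourier transform of the Gaussian, and letting $\varepsilon\downarrow 0$. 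Monotone convergence on the left and the approximate-identity behaviour of $g_\varepsilon$ on the right yield $\hat F\in L^1$ with $\|\hat F\|_1\le (2\pi)^n F(0)$, and Fourier inversion exhibits $F$ as the Fourier transform of a non-negative integrable function.

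Both directions of the theorem now follow quickly. If $C$ is positive definite, then $\hat C\ge 0$ and $\hat C\in L^1$, so by Fubini $\hat C(\boldsymbol\omega,\cdot)\in L^1(\R^l)$ for a.e.\ $\boldsymbol\omega$; for such $\boldsymbol\omega$, inverting the non-negative integrable function $\widehat{C_{\boldsymbol\omega}}$ exhibits $C_{\boldsymbol\omega}$ as a.e.\ equal to a continuous positive definite function on $\R^l$. Conversely, if $C_{\boldsymbol\omega}$ agrees a.e.\ with a continuous positive definite $\phi_{\boldsymbol\omega}$ for a.e.\ $\boldsymbol\omega$, then $\phi_{\boldsymbol\omega}$ is automatically integrable with $\|\phi_{\boldsymbol\omega}\|_1\le\|C\|_1$, so $\hat C(\boldsymbol\omega,\cdot)=\widehat{\phi_{\boldsymbol\omega}}\ge 0$ for a.e.\ $\boldsymbol\omega$; joint continuity of $\hat C$ upgrades this to $\hat C\ge 0$ everywhere, and the auxiliary fact concludes that $C\in\mathcal P(\R^k\times\R^l)$. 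The main obstacle is the $L^1$-integrability of $\hat C$, which is not free from $C\in L^1$ alone; the Gaussian-smoothing step is the only nontrivial analytic ingredient, while the rest is careful Fubini bookkeeping to pass between ``everywhere'' and ``almost every $\boldsymbol\omega$''.
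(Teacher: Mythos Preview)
Your proof is correct and follows essentially the same route as the paper: both reduce to the Integrable Bochner Theorem (your ``auxiliary fact'' is its Euclidean case, and your Gaussian smoothing is a concrete instance of the approximative-identity argument the paper uses to prove it in general), and then finish with the same Fubini bookkeeping and the continuity upgrade from ``a.e.\ $\boldsymbol\omega$'' to ``everywhere''. The paper frames the argument for products of arbitrary Gelfand pairs and obtains Gneiting's theorem by specializing to $(\R^k,\{0\})\times(\R^l,\{0\})$, but the analytic content is identical.
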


From a mathematical point of view Equation \eqref{eq:A2} has to be made precise in the sense that $C(\cdot;\mathbf{u})$ need only be integrable on $\R^k$ for almost all $\mathbf{u}\in\R^l$ by Fubini's Theorem, so for $\mathbf{u}$ in a null-set the Fourier transform  in \eqref{eq:A2}
is not well defined.

This paper gives a unifying framework that encompasses all the relevant contributions in the construction and characterization of space-time covariance functions. Specifically we have chosen to present our results using the abstract language of locally compact groups (\cite{D}, \cite{S}) and harmonic analysis on Gelfand pairs (\cite{Dieu}, \cite{vD}, \cite{W}).
The latter includes the classical framework for harmonic analysis on locally compact abelian groups (\cite{R}, \cite{B:F}, \cite{S}) and the case of homogeneous spaces like the  spheres $\S^d$ (\cite[Chap. 9]{F}, \cite{Sc}, \cite{D:X}). We have tried to formulate the results under minimal assumptions  and to introduce stringent mathematical formulation of previous results that are mathematically inaccurate.

In this way  the work of several authors like \cite{A:G}, \cite{C:H}, \cite{Gn}, \cite{P:C:G:W:A}, \cite{P:G:M}, \cite{P:W} will be special cases of our main Theorem~\ref{thm:CHG*}.  

We give some background material about harmonic analysis on Gelfand pairs  in Section 2.
The main results in harmonic analysis on Gelfand pairs are analogues of Bochner's Theorem, the Inversion Theorem and Plancherel's Theorem, which are classical theorems
about locally compact abelian groups. We also need an addition to Bochner's Theorem. It is presented in Theorem ~\ref{thm:God-int}, and we call it the Integrable Bochner Theorem. We cannot claim anything new in this theorem, but we have only been able to find it formulated in an old paper of ours, see \cite[Corollary 2.3]{B}, or under unnecessary  extra conditions, see \cite{S}, Theorems 1.9.8 and 1.9.12. 

 In Section 3 we formulate our main result Theorem~\ref{thm:CHG*}, which we call the Cressie-Huang-Gneiting Theorem about products of two Gelfand pairs. The name is chosen to acknowledge the inspiring results of \cite{C:H} and \cite{Gn}.
Theorem~\ref{thm:Gn} is a special case of this result when $\R^k$ and $\R^l$ are considered as abelian Gelfand pairs.

 Corollary~\ref{thm:cor1} treats the special case, where the first Gelfand pair is compact, leading to series expansions in spherical functions.

 In Section 4 we discuss Corollary~\ref{thm:cor1} in the special case, where the first Gelfand pair is $(O(d+1),O(d))$ and the second is abelian. This covers the Theorem of Porcu and White in \cite{P:W} about covariance functions on spheres cross time.

In Section 5 we deviate from the main theme by revisiting a result of Gneiting. In \cite{Gn1} he showed that a completely monotonic function  defines positive definite functions on spheres $\S^d$ of any dimension. We give a new proof of this result. The new proof has the advantage over the original proof of Gneiting
by giving expressions  for the power series coefficients in terms of a family of polynomials related to exponential Bell partition polynomials.

In the Appendix we prove the existence of  a strictly positive, bounded, continuous and integrable function $f$ on $\R$ for which the Fourier transform $\mathcal F f$ is not integrable. This shows that the construction of Cressie and Huang in \cite{C:H} does not lead to all integrable positive definite functions on the product $\R^d\times\R$.

\section{Harmonic analysis on Gelfand Pairs}
For a locally compact topological space $X$ endowed with a positive Radon measure $\mu$ we denote by $L^p(X,\mu)=L^p(\mu)$ the Banach space of equivalence classes of measurable functions  whose $p$'th power ($1\le p<\infty$) is $\mu$-integrable. 

In the following $G$ denotes a locally compact group with neutral element $e_G$ and left Haar measure $\om_G$. A  function $f:G\to\mathbb C$ is called positive definite if
for any $n\in\mathbb N$ and any $u_1,\ldots,u_n\in G$ the $n\times n$-matrix 
$$
[f(u_k^{-1}u_l)]_{k,l=1}^n
$$
is positive semidefinite, {\em i.e.}, for any $(a_1,\ldots,a_n)\in\C^n$
\begin{equation}\label{eq:pdpointnw}
\sum_{k,l=1}^n f(u_k^{-1}u_l)a_k\overline{a_l}\ge 0, 
\end{equation}
see e.g. \cite[p. 255]{D}, where $f$ is said to be of positive type, or \cite[p.14]{S}. It is known that a positive definite function $f$ is bounded and satisfies 
\begin{equation}\label{eq:pdele}
f(u^{-1})=\overline{f(u)},\quad |f(u)|\le f(e_G),\quad u\in G.
\end{equation}

 The set of continuous and positive definite functions on $G$ is denoted
$\mathcal P(G)$. 
 
 If the group $G$ is abelian, we use the additive notation, and the neutral element is denoted $0$. We shall mainly reserve the letter $A$ for locally compact abelian groups,  LCA-groups in short. Concerning harmonic analysis on LCA-groups $A$, we refer to \cite{R}. The dual group $\widehat{A}$ consists of all continuous group homomorphisms $\gamma:A\to\mathbb T$, called characters. Here $\mathbb T$ is the unit circle in the complex plane, considered as a multiplicative group. The group operation of $\widehat{A}$ is pointwise multiplication. 

 The Fourier transform of a function $f\in L^1(\om_A)=L^1(A,\om_A)$ is denoted $\mathcal Ff$ or $\mathcal F_Af$, if it is necessary to mention the group, and is defined by
 \begin{equation}\label{eq:FT}
 \mathcal F_Af(\gamma)=\mathcal Ff(\gamma)=\int_A \overline{\gamma(u)}f(u)\,d\om_A(u),\quad \gamma\in\widehat{A}.
 \end{equation}

Harmonic analysis on LCA-groups is a special case of harmonic analysis on Gelfand pairs $(G,K)$, where $G$ is a locally compact group and $K$ is a compact subgroup of $G$ such that the convolution algebra of $K$-bi-invariant continuous functions with compact support on $G$ is commutative.  We use the terminology of \cite{B:P:P}, which contains a short introduction to Gelfand pairs, but refer to \cite{Dieu}, \cite{vD} and \cite{W} for detailed information about Gelfand pairs. In \cite{Dieu} all groups are assumed metrizable and separable, but this is no restriction for the applications to statistics. For any Gelfand pair $(G,K)$ the group  $G$ is unimodular, cf. \cite{B}, so the left invariant Haar measure $\om_G$ is also right invariant. For functions $f_1, f_2: G\to\C$ their convolution $f_1\star f_2$ is then defined as
$$
f_1\star f_2(x)=\int_G f_1(y)f_2(y^{-1}x)d\om_G(y)=\int_G f_1(xy^{-1})f_2(y)d\om_G(y),
\quad x\in G,
$$
whenever the integrals make sense.

 For a set $\mathcal E$ of functions on $G$ we denote by $\mathcal E_K^\sharp$ the set of functions from $\mathcal E$ which are bi-invariant under $K$.

For $f\in C(G)$, {\em i.e.}, $f:G\to\C$ is continuous, we define
\begin{equation}\label{eq:proj}
f^\sharp(u):=\int_K\int_K f(kuk')d\om_K(k) d\om_K(k'),\quad u\in G,
\end{equation}
where $\om_K$ is Haar measure on $K$ normalized to $\om_K(K)=1$.
Then $f^\sharp\in C^\sharp_K(G)$ and 
$$
\int_G f(u)d\om_G(u)=\int_G f^\sharp(u)d\om_G(u)
$$
provided $f\in L^1(\om_G)$.

The compact subgroup $K$ determines an equivalence relation $\sim$ in $G$ defined by $x\sim y$ if and only if $x=kyk'$ for some $k,k'\in K$. The equivalence classes are the compact sets $KxK,x\in G$, which are called double cosets. The set of double cosets is denoted $K\backslash  
G/K$, and it is a locally compact space in the quotient topology, which by definition is the finest topology making the mapping $x\mapsto KxK$ from $G$ to $K\backslash G/K$ continuous. Functions on $K\backslash G/K$  can be identified with functions on $G$ which are bi-invariant with respect to $K$.

For an LCA-group $A$ the pair $(A,\{0\})$ is a Gelfand pair called abelian.
The characters for LCA-groups are replaced by positive definite spherical functions for  Gelfand pairs $(G,K)$. A spherical function is a continuous function $\varphi:G\to \C$ satisfying $\varphi(e_G)=1$ and
\begin{equation}\label{eq:sph}
\int_K \varphi(ukv) d\om_K(k)=\varphi(u)\varphi(v),\quad u,v\in G.
\end{equation}
It is automatically bi-invariant under $K$. For abelian Gelfand pairs Equation \eqref{eq:sph} becomes the homomorphism property of characters.

 For $f\in L^1(\om_G)_K^\sharp$ the Fourier transform is the function $\mathcal F f:Z\to\C$ defined by
\begin{equation}\label{eq:Gf}
\mathcal F f(\varphi)=\int_G \overline{\varphi(u)}f(u)\,d\om_G(u),\quad \varphi\in Z,
\end{equation}
where $Z$ is the set of positive definite spherical functions for $(G,K)$, called the dual space of the Gelfand pair. It carries a locally compact topology such that $\varphi_j\to\varphi$ in this topology if and only if $\varphi_j(u)\to\varphi(u)$ uniformly for $u$ in compact subsets of $G$.   Recall from \eqref{eq:pdele} that $\varphi\in Z$ has the property $|\varphi(u)|\le \varphi(e_G)=1$ for $u\in G$. If several Gelfand pairs $(G,K)$ are present, we write $\mathcal F_G f$ instead of $\mathcal F f$. 

We construct an approximative identity $(\rho_V^\sharp)_{V\in\mathcal V}$ for a Gelfand pair  in the following way: Let $C_c(G)$ denote the set of continuous functions on $G$ with compact support, and let $\mathcal V$ denote the downwards filtering family of compact neighbourhoods $V$ of $e_G$ in $G$. For $V\in\mathcal V$ we choose $\rho_V\in C_c(G)_+$ satisfying $\supp(\rho_V)\subseteq V$ and $\int\rho_V d\om_G=1$. Therefore $\rho_V^\sharp\in C_c(G)_K^\sharp$ is non-negative with integral 1. Given a bi-invariant function or measure $f$, the general principle is that $f\star\rho_V^\sharp\to f$ as $V$ shrinks to $e_G$ in a topology appropriate for $f$. For $f\in C(G)_K^\sharp$ the convergence is uniform on compact subsets of $G$. Notice that $\rho_V^\sharp d\om_G$ converges vaguely to $\om_K$ when $V\to e_G$.

There exists a positive Radon measure $\nu$ on $Z$ called the Plancherel measure.
It is the unique measure such that
\begin{equation}\label{eq:Plan}
\int_G |f(u)|^2\,d\om_G(u)=\int_Z |\mathcal F f(\varphi)|^2 d\nu(\varphi)
\end{equation}
for all $f\in C_c(G)_K^\sharp$.

The mapping $\varphi \mapsto\overline{\varphi}$ is an involutive homeomorphism of $Z$, and $\nu$ is invariant under this mapping:
\begin{equation}\label{eq:nuinv}
\int_Z h(\overline{\varphi})\,d\nu(\varphi)=\int_Z h(\varphi)\,d\nu(\varphi),\quad h\in L^1(\nu).
\end{equation}  

We need the Inversion Theorem for Gelfand pairs in the following form generalizing
 \cite[Corollary 1.21]{S:W} for Euclidean groups. It is a special case of the Inversion Theorem in \cite[p. 140]{B}:

\begin{thm}\label{thm:InvTh} Let $(G,K)$ be a Gelfand pair and let $f\in L^1(\om_G)_K^\sharp$ be such that  $\mathcal F f\in L^1(\nu)$. Then
$$
f(u)=\int_Z \varphi(u) \mathcal F f(\varphi) \,d\nu(\varphi),\quad \om_G\;\mbox{a.e. in $G$}. 
$$
\end{thm}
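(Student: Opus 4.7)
The plan is to construct the candidate inversion directly from the proposed formula and then identify it with $f$ by testing against a dense family of $K$-bi-invariant functions. Define
$$
g(u) := \int_Z \varphi(u)\, \mathcal F f(\varphi)\, d\nu(\varphi), \qquad u \in G.
$$
Absolute convergence and the bound $\|g\|_\infty \le \|\mathcal F f\|_{L^1(\nu)}$ are immediate from $|\varphi(u)| \le \varphi(e_G) = 1$ and $\mathcal F f \in L^1(\nu)$; dominated convergence together with the continuity and $K$-bi-invariance of each spherical function places $g$ in $C(G)_K^\sharp$. The goal is therefore to show $f = g$ a.e.

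I would reduce the problem to the Parseval-type identity
$$
\int_G f(u)\,\overline{\psi(u)}\,d\omega_G(u) \;=\; \int_Z \mathcal F f(\varphi)\,\overline{\mathcal F\psi(\varphi)}\,d\nu(\varphi), \qquad \psi \in C_c(G)_K^\sharp. \quad (\ast)
$$
Granted $(\ast)$, Fubini---legitimate because $\mathcal F f \in L^1(\nu)$ and $\psi$ is bounded with compact support---together with $\int_G \varphi(u)\overline{\psi(u)}\,d\omega_G(u) = \overline{\mathcal F\psi(\varphi)}$ produces the same right-hand side for $\int_G g\,\overline\psi\, d\omega_G$. Hence $f-g$ annihilates $C_c(G)_K^\sharp$; since $f-g$ is $K$-bi-invariant and unimodularity gives $\int(f-g)\overline\phi\,d\omega_G = \int(f-g)\overline{\phi^\sharp}\,d\omega_G$ for every $\phi \in C_c(G)$, the vanishing propagates to all of $C_c(G)$, forcing $f = g$ almost everywhere.

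Establishing $(\ast)$ is the substantive step. Polarizing the Plancherel identity \eqref{eq:Plan} on $C_c(G)_K^\sharp$ and extending by density yields the Parseval formula on $L^2(\omega_G)_K^\sharp$. To bridge the gap to the $L^1$-setting at hand, I smooth $f$ by the approximate identity: $f_V := f \star \rho_V^\sharp$ belongs to $L^1 \cap L^\infty \cap C(G)_K^\sharp \subset L^2(\omega_G)_K^\sharp$, and the convolution formula $\mathcal F f_V = \mathcal F f \cdot \mathcal F\rho_V^\sharp$ follows from Fubini and the spherical-function identity $\int_K \varphi(ukv)d\omega_K(k) = \varphi(u)\varphi(v)$. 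Applying $L^2$-Parseval to $(f_V, \psi)$ gives
$$
\int_G f_V \overline\psi \, d\omega_G \;=\; \int_Z \mathcal F f\cdot\mathcal F\rho_V^\sharp \cdot \overline{\mathcal F\psi}\, d\nu.
$$
As $V$ shrinks to $\{e_G\}$, the left side tends to $\int_G f\overline\psi\,d\omega_G$ (approximate identity in $L^1$ against a bounded $\psi$). On the right, $|\mathcal F\rho_V^\sharp|\le 1$, and the vague convergence $\rho_V^\sharp d\omega_G \to \omega_K$ noted in the paper, together with the fact that $\varphi \equiv 1$ on $K$ (by bi-invariance and $\varphi(e_G)=1$), gives $\mathcal F\rho_V^\sharp(\varphi) \to 1$ pointwise on $Z$; dominated convergence with majorant $|\mathcal F f|\,\|\mathcal F\psi\|_\infty \in L^1(\nu)$ then yields $(\ast)$. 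The main obstacle is precisely this passage from $L^2$-Parseval to the mixed $L^1$/$C_c$ identity $(\ast)$: it requires both the convolution theorem in the Gelfand-pair setting and the careful pointwise analysis of $\mathcal F\rho_V^\sharp$ on the (possibly non-compact) dual $Z$.
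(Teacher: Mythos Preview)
The paper does not give its own proof of Theorem~\ref{thm:InvTh}; it simply records the statement and cites it as ``a special case of the Inversion Theorem in \cite[p.~140]{B}.'' So there is no proof in the paper to compare yours against.

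Your argument is a correct, self-contained proof. The key steps---defining the candidate $g$, reducing $f=g$ a.e.\ to the mixed Parseval identity $(\ast)$ via the averaging $\phi\mapsto\phi^\sharp$, and then obtaining $(\ast)$ by smoothing $f$ with $\rho_V^\sharp$, applying $L^2$-Plancherel, and passing to the limit---are all sound. The two technical points you flag are handled: the convolution formula $\mathcal F(f\star\rho_V^\sharp)=\mathcal Ff\cdot\mathcal F\rho_V^\sharp$ follows exactly from the functional equation \eqref{eq:sph} once one averages over $K$ using the bi-invariance of $\rho_V^\sharp$; and $\mathcal F\rho_V^\sharp(\varphi)\to 1$ pointwise follows from $\varphi|_K\equiv 1$ (bi-invariance plus $\varphi(e_G)=1$) together with the vague convergence $\rho_V^\sharp\,d\omega_G\to\omega_K$. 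One small point worth making explicit is that for $f_V\in L^1\cap L^2$ the integral (``$L^1$'') Fourier transform and the Plancherel (``$L^2$'') transform coincide, so that the identity you get from $L^2$-Parseval really involves $\mathcal Ff\cdot\mathcal F\rho_V^\sharp$; this is standard but is the hinge on which the passage from \eqref{eq:Plan} to $(\ast)$ turns.
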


We recall the Bochner-Godement Theorem, cf. \cite[Theorem 2.1]{B:P:P}, \cite[Theorem 6.4.4]{vD}, according to which $f\in\mathcal P^\sharp_K(G)$ if and only if there exists a (uniquely determined)  positive finite Radon measure $\mu$ on $Z$ such that
\begin{equation}\label{eq:B-G}
f(u)=\int_Z \varphi(u)\,d\mu(\varphi),\quad u\in G.
\end{equation}

The following result is useful for verifying that a given function $f$ is positive definite, when the measure $\mu$ is unknown. It is a special case of Corollary 2.3 in \cite{B} and is essential for the following.

\begin{thm}[The Integrable Bochner Theorem]\label{thm:God-int} Let $(G,K)$ be a Gelfand pair and let $f:G\to\C$ be a continuous and integrable function which is bi-invariant under $K$. Then $f$ is positive definite if and only if  
$$
\mathcal F f(\varphi)\ge 0,\quad \varphi\in \supp(\nu).
$$

If the equivalent conditions hold, then $\mathcal F f\in L^1(\nu)$ and
$$
f(u)=\int_Z \varphi(u)\mathcal F f(\varphi)\,d\nu(\varphi),\quad u\in G.
$$
\end{thm}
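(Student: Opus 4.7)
My plan is to regularize $f$ via a symmetric approximate identity and bootstrap from the Inversion Theorem in both directions. Choose compact symmetric $V\ni e_G$ and $\rho_V\in C_c(G)_+$ with $\supp\rho_V\subseteq V$, $\int\rho_V\,d\om_G=1$, and $\rho_V(u^{-1})=\rho_V(u)$; then $\rho_V^\sharp$ is symmetric, hence $\mathcal F\rho_V^\sharp$ is real-valued. Set $f_V:=f\star\rho_V^\sharp\star\rho_V^\sharp\in L^1(\om_G)_K^\sharp\cap C(G)$. The convolution theorem for Gelfand pairs gives $\mathcal Ff_V=\mathcal Ff\cdot(\mathcal F\rho_V^\sharp)^2$, and this lies in $L^1(\nu)$ because $\mathcal Ff$ is bounded by $\|f\|_1$ while $(\mathcal F\rho_V^\sharp)^2\in L^1(\nu)$ by Plancherel applied to $\rho_V^\sharp\in L^2(\om_G)$. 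Note also $(\mathcal F\rho_V^\sharp)^2\ge 0$ pointwise on $Z$.

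For the sufficient direction, assume $\mathcal Ff\ge 0$ on $\supp(\nu)$. Then $\mathcal Ff_V\ge 0$ on $\supp(\nu)$, and Theorem~\ref{thm:InvTh} applied to $f_V$ yields $f_V(u)=\int_Z\varphi(u)\mathcal Ff_V(\varphi)\,d\nu(\varphi)$ almost everywhere---and everywhere by continuity of both sides. This exhibits $f_V$ as the Bochner-Godement integral against the positive finite Radon measure $\mathcal Ff_V\cdot\nu|_{\supp\nu}$, so $f_V\in\mathcal P(G)$. Since $\rho_V^\sharp\star\rho_V^\sharp$ is an approximate identity and $f$ is continuous, $f_V\to f$ uniformly on compacta, in particular pointwise; positive definiteness is preserved under pointwise limits via \eqref{eq:pdpointnw}.

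For the necessary direction, the Bochner-Godement Theorem provides a unique positive finite Radon measure $\mu$ on $Z$ with $f=\int\varphi\,d\mu$. The spherical-function identity $\rho_V^\sharp\star\varphi=\mathcal F\rho_V^\sharp(\varphi)\cdot\varphi$ for $\varphi\in Z$, iterated and combined with Fubini, yields $f_V(u)=\int\varphi(u)(\mathcal F\rho_V^\sharp(\varphi))^2\,d\mu(\varphi)$, so by Bochner-Godement the spectral measure of $f_V$ is $\mu_V=(\mathcal F\rho_V^\sharp)^2\mu$. Combined with the Inversion representation $f_V=\int\varphi\cdot\mathcal Ff_V\,d\nu$, we obtain two bounded complex Radon measures on $Z$---namely $\mu_V$ and $\mathcal Ff_V\cdot\nu$---with identical inverse spherical transform $u\mapsto f_V(u)$. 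Injectivity of this transform on bounded Radon measures (a consequence of Stone-Weierstrass density of the $*$-algebra $\mathcal A:=\{\mathcal Fg:g\in C_c(G)_K^\sharp\}$ in $C_0(Z)$---$\mathcal A$ separates points and, since $\mathcal F\rho_V^\sharp(\varphi)\to\varphi(e_G)=1$, has no common zero---together with the Riesz representation theorem) then forces $(\mathcal F\rho_V^\sharp)^2\mu=(\mathcal F\rho_V^\sharp)^2\mathcal Ff\cdot\nu$ as measures. Cancelling the common non-negative factor on the open set $\{(\mathcal F\rho_V^\sharp)^2>0\}$ and letting $V\to e_G$---these sets exhaust $Z$ by pointwise convergence---gives $\mu=\mathcal Ff\cdot\nu$ on all of $Z$. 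Consequently $\mathcal Ff\ge 0$ $\nu$-a.e., hence on $\supp(\nu)$ by continuity; $\mathcal Ff\in L^1(\nu)$ with $\int\mathcal Ff\,d\nu=\mu(Z)=f(e_G)$; and the inversion formula is the Bochner-Godement representation rewritten.

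The main obstacle is justifying the injectivity of the inverse spherical transform on bounded complex Radon measures on $Z$. This rests on the Stone-Weierstrass density of $\mathcal A$ in $C_0(Z)$ combined with Riesz representation, and it already uses the identity $\varphi(u^{-1})=\overline{\varphi(u)}$ for $\varphi\in Z$ to handle conjugation. Once this step is secured, every remaining manipulation is routine Fubini and convolution bookkeeping.
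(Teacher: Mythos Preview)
Your proof is correct. The sufficient direction (``$\mathcal Ff\ge 0$ on $\supp(\nu)$ implies $f\in\mathcal P(G)$'') is essentially identical to the paper's: both regularize via $f_V=f\star\rho_V^\sharp\star(\rho_V^\sharp\tilde{)}$ (your symmetric choice of $\rho_V$ makes this the same as $f\star\rho_V^\sharp\star\rho_V^\sharp$), use Plancherel to place $\mathcal Ff_V$ in $L^1(\nu)$, apply the Inversion Theorem to get $f_V\in\mathcal P(G)$, and pass to the limit.

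The necessary direction genuinely differs. The paper simply invokes the Inversion Theorem in the stronger form of \cite[p.~84--85]{vD}, which already asserts that for $f\in\mathcal P_K^\sharp(G)\cap L^1(\om_G)$ the Bochner--Godement measure satisfies $d\mu=\mathcal Ff\,d\nu$; everything then follows immediately. You instead \emph{reprove} this identity by comparing two spectral representations of $f_V$---one from Bochner--Godement applied to $\mu$, one from the Inversion Theorem~\ref{thm:InvTh}---and invoking injectivity of the inverse spherical transform on bounded complex measures via Stone--Weierstrass. This buys self-containment: you never need the sharper van~Dijk inversion statement, only the ``$L^1$ on both sides'' version quoted as Theorem~\ref{thm:InvTh}. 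The cost is that several background facts must be checked (that $\mathcal A\subset C_0(Z)$, i.e.\ a Riemann--Lebesgue lemma for Gelfand pairs; that locally equal Radon measures are globally equal so that the exhaustion $Z=\bigcup_V\{(\mathcal F\rho_V^\sharp)^2>0\}$ suffices). These are standard, so the argument stands.
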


\begin{proof} For the convenience of the reader we give a proof.

(i) Assume first that $\mathcal F f(\varphi)\ge 0$ for $\varphi\in\supp(\nu)$.

Under the additional hypothesis that $\mathcal F f\in L^1(\nu)$, we get by  
 Theorem~\ref{thm:InvTh}
$$
f(u)=\int_Z \varphi(u)\mathcal F f(\varphi) d\nu(\varphi),\quad u\in G,
$$
and we have equality for all $u\in G$, because $f$ is assumed continuous.
For $h\in C_c(G)$ we then have ($\tilde{h}(u):=\overline{h(u^{-1})},\;u\in G$),   
\begin{eqnarray*}
&&\int_G f(u) \tilde{h}\star h(u)\,d\om_G(u)=
\int_G \tilde{h}\star h(u)\int_Z \varphi(u)\mathcal F f(\varphi)d\nu(\varphi) d\om_G(u)\\
&=&
\int_Z \mathcal F f(\varphi) \int_G \varphi(u) \tilde{h}\star h(u) d\om_G(u) d\nu(\varphi)
 \ge 0,
\end{eqnarray*}
because
$$
\int_G \varphi(u) \tilde{h}\star h(u) d\om_G(u)\ge 0
$$
for all $\varphi\in Z$, and then $f\in\mathcal P(G)$, cf. \cite[p. 256]{D}.

Without the assumption $\mathcal F f\in L^1(\nu)$, we consider $f_V:=f \star \rho_V^\sharp \star (\rho_V^\sharp\tilde{)}$, using an approximative identity.
Since $\mathcal F f_V=\mathcal F f |\mathcal F \rho_V^\sharp|^2$ is non-negative and $\nu$-integrable by  \eqref{eq:Plan}, we get $f_V\in\mathcal P(G)$ by what has just been proved. We next use that $f_V(u)\to f(u)$ locally uniformly for $u\in G$ as $V$ shrinks to $e_G$, and we get $f\in\mathcal P(G)$.

(ii) Conversely, if $f$ is assumed positive definite, we know from Bochner-Godement's Theorem that \eqref{eq:B-G} holds
for a uniquely determined positive finite measure $\mu$ on $Z$. However, by the Inversion Theorem in the form of \cite[p. 84-85]{vD}, the integrability of $f$ implies that $\mathcal F f\in L^1(\nu)$ and $d\mu=\mathcal F f d\nu$, and this forces $\mathcal F f$ to be non-negative on $\supp(\nu)$.
\end{proof}

\begin{rem}\label{thm:reducedsupp}
{\rm The reader is warned that the support of the Plancherel measure $\supp(\nu)$ can be strictly smaller than $Z$. This cannot happen for compact Gelfand pairs $(G,K)$ ({\em i.e.},  if $G$ is compact) and for abelian Gelfand pairs $(A, \{0\})$.

  For compact Gelfand pairs the dual space $Z$ is discrete and for each $\varphi\in Z$ we consider a certain vector space
\begin{equation}\label{eq:hfi}
H_\varphi:=\span \{\varphi_g \mid g\in G\}.
\end{equation} 
 Here $\varphi_g(x):=\varphi(g^{-1}x)$ is a right invariant continuous function on $G$ considered as a continuous function on the homogeneous space $G/K$. It is a classical fact that $H_\varphi$
 is of finite dimension, which we denote by $\d(\varphi)$, and the Plancherel measure is $\nu(\{\varphi\})=\d(\varphi)$.
cf. \cite[p.86]{vD} or \cite[p. 265]{B:P:P}.

  In the abelian case $Z$ is the dual group $\widehat{A}$, and $\nu$ is the Haar measure on $\widehat{A}$ called dual to $\om_A$, meaning that the Inversion Theorem holds, cf. \cite[p. 24]{R}. 
}
\end{rem}

For the reader's convenience we state Theorem~\ref{thm:God-int} for the important special
cases of abelian and compact Gelfand pairs.

\begin{cor}\label{thm:abgp} Let $A$ be an LCA-group and $f:A\to\C$ a continuous and integrable function. 

Then $f\in\mathcal P(A)$ if and only if 
$$
\mathcal F f(\gamma)\ge 0, \quad\gamma\in\widehat{A}.
$$
If the equivalent conditions hold, then $\mathcal F f\in L^1(\om_{\widehat{A}})$ and
$$
f(u)=\int_{\widehat{A}} \gamma(u)\mathcal F f(\gamma) d\om_{\widehat{A}}(\gamma),\quad u\in A.
$$
\end{cor}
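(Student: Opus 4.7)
The plan is to derive Corollary~\ref{thm:abgp} as a direct specialization of the Integrable Bochner Theorem (Theorem~\ref{thm:God-int}) to the abelian Gelfand pair $(A,\{0\})$, where $K=\{0\}$ is the trivial subgroup.

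First I would note the identifications given in Section~2 and Remark~\ref{thm:reducedsupp}: when $K=\{0\}$, every function $f:A\to\C$ is automatically $K$-bi-invariant, so $f\in C(A)\cap L^1(\om_A)$ matches the hypothesis of Theorem~\ref{thm:God-int}. The dual space $Z$ of spherical positive definite functions is exactly the character group $\widehat{A}$, since equation~\eqref{eq:sph} with $K=\{0\}$ reduces to the homomorphism identity $\varphi(uv)=\varphi(u)\varphi(v)$, which together with $|\varphi|\le 1$ characterizes elements of $\widehat{A}$. The Plancherel measure $\nu$ on $Z=\widehat{A}$ coincides with the dual Haar measure $\om_{\widehat{A}}$, and the Fourier transform \eqref{eq:Gf} specializes to \eqref{eq:FT}.

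Next I would invoke the statement of Remark~\ref{thm:reducedsupp} that for abelian Gelfand pairs $\supp(\nu)=\widehat{A}$, so the condition ``$\mathcal F f(\varphi)\ge 0$ for $\varphi\in\supp(\nu)$'' in Theorem~\ref{thm:God-int} becomes exactly ``$\mathcal F f(\gamma)\ge 0$ for all $\gamma\in\widehat{A}$''. With these identifications in place, the direct implication, the converse, the $L^1$-integrability of $\mathcal F f$, and the inversion formula all transfer verbatim from Theorem~\ref{thm:God-int}.

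There is no real obstacle here since Corollary~\ref{thm:abgp} is essentially a translation of Theorem~\ref{thm:God-int} into the classical language of LCA-groups; the only subtle point to flag is the equality $\supp(\nu)=\widehat{A}$, which prevents a spurious weakening of the positivity condition and which has already been recorded in Remark~\ref{thm:reducedsupp}. Thus the proof reduces to a short paragraph citing Theorem~\ref{thm:God-int} together with these identifications.
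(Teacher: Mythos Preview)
Your proposal is correct and matches the paper's approach exactly: the paper presents Corollary~\ref{thm:abgp} without a separate proof, simply stating it as the specialization of Theorem~\ref{thm:God-int} to the abelian Gelfand pair $(A,\{0\})$, using precisely the identifications (dual space $=\widehat{A}$, Plancherel measure $=\om_{\widehat{A}}$, $\supp(\nu)=\widehat{A}$) recorded in Remark~\ref{thm:reducedsupp}. Your write-up is in fact more explicit than what the paper provides.
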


\begin{cor}\label{thm:comgp} Let $(G,K)$ be a compact Gelfand pair and $f\in C(G)_K^\sharp$.

Then $f\in\mathcal P(G)$ if and only if 
$$
\mathcal F f(\varphi)\ge 0,\quad \varphi\in Z.
$$
If the equivalent conditions hold, then $\mathcal F f\in L^1(\nu)$ and
$$
f(u)=\sum_{\varphi\in Z} \delta(\varphi) \varphi(u)\mathcal F f(\varphi), \quad u\in G.
$$
The expansion is absolutely and uniformly convergent for $u\in G$.
\end{cor}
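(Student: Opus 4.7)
The plan is to deduce Corollary~\ref{thm:comgp} as a direct specialization of Theorem~\ref{thm:God-int}. Two features of the compact setting, both already recorded in Remark~\ref{thm:reducedsupp}, make the reduction painless: first, $\supp(\nu)=Z$, so the positivity condition on $\supp(\nu)$ in Theorem~\ref{thm:God-int} becomes positivity on all of $Z$; second, $\nu$ is the weighted counting measure $\nu(\{\varphi\})=\d(\varphi)$, so integrals against $\nu$ turn into sums weighted by the dimensions $\d(\varphi)$.

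First I would observe that since $G$ is compact, $\om_G$ is a finite Radon measure and every $f\in C(G)_K^\sharp$ lies automatically in $L^1(\om_G)_K^\sharp$; this discharges the integrability hypothesis of Theorem~\ref{thm:God-int}. Applying that theorem then gives the biconditional $f\in\mathcal P(G) \iff \mathcal F f(\varphi)\ge 0$ for all $\varphi\in\supp(\nu)=Z$, together with the assertion that $\mathcal F f\in L^1(\nu)$ and the pointwise inversion formula
$$
f(u)=\int_Z \varphi(u)\mathcal F f(\varphi)\,d\nu(\varphi),\qquad u\in G.
$$
Rewriting this integral against the discrete weighted measure $\nu$ of Remark~\ref{thm:reducedsupp} immediately produces the series $\sum_{\varphi\in Z}\d(\varphi)\varphi(u)\mathcal F f(\varphi)$.

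For absolute and uniform convergence on $G$, I would invoke the Weierstrass $M$-test. Because each $\varphi\in Z$ is positive definite with $|\varphi(u)|\le \varphi(e_G)=1$ by \eqref{eq:pdele}, and because $\mathcal F f(\varphi)\ge 0$ under the equivalent conditions, one has the pointwise majorant
$$
\bigl|\d(\varphi)\varphi(u)\mathcal F f(\varphi)\bigr|\le \d(\varphi)\mathcal F f(\varphi),\qquad u\in G,
$$
and the right side is summable in $\varphi$, its sum being the $L^1(\nu)$-norm of $\mathcal F f$ (and in fact equal to $f(e_G)$, as one sees by setting $u=e_G$ in the inversion formula). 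This yields uniform convergence on $G$.

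There is no genuine obstacle in this argument; the only step calling for care is confirming that compactness of $G$ plus the two facts recalled in Remark~\ref{thm:reducedsupp} remove every qualifier ("almost everywhere", "on $\supp(\nu)$", the integrability hypothesis) that appears in the general Integrable Bochner Theorem, so that the clean statement of Corollary~\ref{thm:comgp} is simply its restriction to the compact case.
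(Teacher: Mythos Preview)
Your proposal is correct and follows exactly the approach the paper intends: the paper presents Corollary~\ref{thm:comgp} as a direct specialization of Theorem~\ref{thm:God-int} without a separate proof, and you have supplied precisely the routine verifications (automatic integrability by compactness, $\supp(\nu)=Z$ and $\nu(\{\varphi\})=\d(\varphi)$ from Remark~\ref{thm:reducedsupp}, and the Weierstrass $M$-test for uniform convergence) that justify this reduction.
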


\begin{rem} {\rm If $f:A\to\C$ is integrable without being continuous on an LCA-group $A$, then it is possible that $\mathcal F f\ge 0$ without $f$ being positive definite in the classical sense and in particular bounded. We give an example with $A=\R$.

 Linnik's probability densities $p_\a, 0<\a\le 1$ are symmetric on $\R$ with
$$
\mathcal Fp_\a(t)=(1+|t|^\a)^{-1},\quad t\in\R.
$$ 
They are given as 
$$
p_\a(x)=\frac{\sin(\a\pi/2)}{\pi}\int_0^\infty\frac{e^{-xy}y^\a}{|1+y^\a e^{i\a\pi/2}|^2} dy, \quad x\ge 0,
$$
see \cite{L}, \cite{K:O:H}. In particular 
$$
p_\a(0)=\lim_{x\to 0}p_\a(x)=\infty,
$$
and $p_\a$ is decreasing and continuous on $]0,\infty[$.

\medskip
Cooper \cite{Co} initiated a study of the set $\mathcal P(J)$ of measurable functions $f:\R\to\C$ satisfying
$$
\Int\Int f(x-y) h(x)\overline{h(y)}dx\,dy\ge 0,
$$
for all $h\in J$, where $J$ is a family of measurable functions $h:\R\to\C$.

Note that if $f$ is continuous then 
$$
f\in\mathcal P(C_c(\R)) \iff f\in\mathcal P(\R).
$$
Theorem 2 in \cite{P:S} states that for $f\in L^1(\R)$
 one has 
$$
\mathcal F f\ge 0 \iff f\in\mathcal P(L^2(\om_{\R})).
$$

}
\end{rem}

\section{The Cressie-Huang-Gneiting Theorem for Gel\-fand pairs}

\subsection{Products of two Gelfand pairs}

For two Gelfand pairs $(G_1,K_1), (G_2,K_2)$ with dual spaces $Z_1, Z_2$ the product $(G_1\times G_2, K_1\times K_2)$ is again a Gelfand pair. Its dual space can be identified with $Z_1\times Z_2$ in the following way: For $\varphi\in Z_1, \psi\in Z_2$
$$
\varphi\otimes\psi(x,y):=\varphi(x)\psi(y),\quad (x,y)\in G_1\times G_2
$$
is a positive definite spherical function for $(G_1\times G_2, K_1\times K_2)$, and all positive definite spherical functions for the product Gelfand pair are given in this way.

The Plancherel measure for $(G_1\times G_2, K_1\times K_2)$ is the product measure $\nu_1\otimes\nu_2$ of the Plancherel measures $\nu_i$ for $(G_i,K_i)$, $i=1,2$.

For $f\in L^1(G_1\times G_2, \om_{G_1\times G_2})_{K_1\times K_2}^\sharp$ let
\begin{equation}\label{eq:Hnull*}
G_1':=\{x\in G_1 \mid f(x,\cdot)\in L^1(G_2,\om_{G_2})\}.
\end{equation}
By Fubini's Theorem $\om_{G_1}(G_1\setminus G_1')=0$ and $G_1'$ is $K_1$ bi-invariant.
For all $x\in G_1'$ we have $f(x,\cdot)\in L^1(\om_{G_2})_{K_2}^\sharp$, and for those $x$ we define
\begin{equation}\label{eq:FLg*} 
\mathcal F_{G_2} f(x,\cdot)(\psi)=\int_{G_2}\overline{\psi(y)} f(x,y) d\om_{G_2}(y),\quad \psi\in Z_2.
\end{equation}

Similarly
\begin{equation}\label{eq:Hnull*2}
G_2':=\{y\in G_2 \mid f(\cdot,y)\in L^1(G_1,\om_{G_1})\}
\end{equation}
satisfies $\om_{G_2}(G_2\setminus G_2')=0$ and $G_2'$ is $K_2$ bi-invariant. For all $y\in G_2'$
we have $f(\cdot,y)\in L^1(\om_{G_1})_{K_1}^\sharp$, and for those $y$ we define
\begin{equation}\label{eq:FLh}
  \mathcal F_{G_1}f(\cdot,y)(\varphi)=\int_{G_1} \overline{\varphi(x)} f(x,y) d\om_{G_1}(x),\quad \varphi\in Z_1.
 \end{equation}

\begin{thm}\label{thm:CHG*} Let $(G_1,K_1)$ and $(G_2, K_2)$ be Gelfand pairs with dual spaces $Z_1$ and $Z_2$, and let $f:G_1\times G_2\to \C$ be a continuous and integrable function, bi-invariant under $K_1\times K_2$. 

The following conditions are equivalent:
 
(i) $f\in\mathcal P(G_1\times G_2)$.

(ii) For almost  all $\psi\in\supp(\nu_2)$   
the function $x\mapsto \mathcal F_{G_2} f(x,\cdot)(\psi)$ (defined for $x\in G_1'$) 
is equal almost everywhere in $G_1$ to a $K_1$ bi-invariant continuous positive definite  function denoted $p_{G_1}[f,\psi]$ on $G_1$. 

(iii) For almost all $\varphi\in\supp(\nu_1)$ the function  $y\mapsto \mathcal F_{G_1}f(\cdot,y)(\varphi)$ (defined for $y\in G_2'$) is equal almost everywhere in $G_2$ to a $K_2$ bi-invariant continuous positive definite function denoted $p_{G_2}[f,\varphi]$ on $G_2$.  

If the equivalent conditions are satisfied, then $p_{G_1}[f,\psi]\in L^1(\om_{G_1})$,
$p_{G_2}[f,\varphi]\in L^1(\om_{G_2})$ and for $(x,y)\in G_1\times G_2$
\begin{equation}\label{eq:fxy}
f(x,y)=\int_{\supp(\nu_1)}\varphi(x)p_{G_2}[f,\varphi](y)\,d\nu_1(\varphi)=
\int_{\supp(\nu_2)}\psi(y)p_{G_1}[f,\psi](x)\,d\nu_2(\psi),
\end{equation}
where the first integrand is defined for $\nu_1$-almost all $\varphi\in\supp(\nu_1)$ and the second integrand is defined for $\nu_2$-almost all $\psi\in\supp(\nu_2)$.
\end{thm}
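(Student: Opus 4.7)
The proof reduces everything to the Integrable Bochner Theorem (Theorem~\ref{thm:God-int}) applied to the product Gelfand pair $(G_1\times G_2,K_1\times K_2)$, whose dual identifies with $Z_1\times Z_2$ via $(\varphi,\psi)\mapsto\varphi\otimes\psi$ and whose Plancherel measure is $\nu_1\otimes\nu_2$, so that $\supp(\nu_1\otimes\nu_2)=\supp(\nu_1)\times\supp(\nu_2)$. Put $\widehat f(\varphi,\psi):=\mathcal F_{G_1\times G_2}f(\varphi\otimes\psi)$ and $F_\psi(x):=\mathcal F_{G_2}f(x,\cdot)(\psi)$. Since $f\in L^1(\omega_{G_1\times G_2})$, Fubini gives $F_\psi\in L^1(\omega_{G_1})_{K_1}^\sharp$ for \emph{every} $\psi\in Z_2$, with $\|F_\psi\|_1\le\|f\|_1$, and
$$
\widehat f(\varphi,\psi)=\mathcal F_{G_1}F_\psi(\varphi),\qquad (\varphi,\psi)\in Z_1\times Z_2.
$$
A symmetric statement holds with $h_\varphi(y):=\mathcal F_{G_1}f(\cdot,y)(\varphi)$.

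\textbf{(i)$\Rightarrow$(ii).} Theorem~\ref{thm:God-int} on the product pair gives $\widehat f\in L^1(\nu_1\otimes\nu_2)$ and $\widehat f\ge 0$ on $\supp(\nu_1)\times\supp(\nu_2)$. By Fubini, for $\nu_2$-a.a.\ $\psi\in\supp(\nu_2)$ one has $\widehat f(\cdot,\psi)\in L^1(\nu_1)$ and $\widehat f(\cdot,\psi)\ge 0$ on $\supp(\nu_1)$; for such $\psi$ set
$$
p_{G_1}[f,\psi](x):=\int_{Z_1}\varphi(x)\widehat f(\varphi,\psi)\,d\nu_1(\varphi).
$$
Dominated convergence (using $|\varphi(x)|\le 1$ and continuity of each $\varphi$) gives continuity in $x$; $K_1$-bi-invariance is inherited from each $\varphi$; and the Bochner-Godement Theorem applied to the positive finite Radon measure $\widehat f(\cdot,\psi)\,d\nu_1$ on $Z_1$ gives $p_{G_1}[f,\psi]\in\mathcal P(G_1)$. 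Since $\mathcal F_{G_1}F_\psi=\widehat f(\cdot,\psi)\in L^1(\nu_1)$, the Inversion Theorem (Theorem~\ref{thm:InvTh}) yields $F_\psi=p_{G_1}[f,\psi]$ almost everywhere in $G_1$; in particular $p_{G_1}[f,\psi]\in L^1(\omega_{G_1})$.

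\textbf{(ii)$\Rightarrow$(i).} For every $\psi$ in the given $\nu_2$-conull subset $S\subseteq\supp(\nu_2)$, $p_{G_1}[f,\psi]$ is continuous, $K_1$-bi-invariant, positive definite, and integrable (being a.e.\ equal to $F_\psi$), so Theorem~\ref{thm:God-int} on $(G_1,K_1)$ gives
$$
\widehat f(\varphi,\psi)=\mathcal F_{G_1}F_\psi(\varphi)=\mathcal F_{G_1}p_{G_1}[f,\psi](\varphi)\ge 0
$$
for all $\varphi\in\supp(\nu_1)$. To extend this inequality from $\supp(\nu_1)\times S$ to the full product support $\supp(\nu_1)\times\supp(\nu_2)$, I use that $\widehat f$ is continuous on $Z_1\times Z_2$ (as the Fourier transform of an $L^1$ function) and that $S$ is topologically dense in $\supp(\nu_2)$ (any nonempty relatively open subset of $\supp(\nu_2)$ has strictly positive $\nu_2$-measure, hence meets the conull set $S$). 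Theorem~\ref{thm:God-int} on the product then delivers $f\in\mathcal P(G_1\times G_2)$. The equivalence with (iii) is obtained by interchanging the roles of $(G_1,K_1)$ and $(G_2,K_2)$ throughout.

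\textbf{Integral representation \eqref{eq:fxy}.} With (i) in force, Theorem~\ref{thm:InvTh} on the product and Fubini (legitimate since $\widehat f\in L^1(\nu_1\otimes\nu_2)$) rewrite $f(x,y)$ as $\int_{\supp(\nu_2)}\psi(y)\,p_{G_1}[f,\psi](x)\,d\nu_2(\psi)$ for a.e.\ $(x,y)$; the symmetric computation gives the first iterated integral in \eqref{eq:fxy}. Continuity of $f$ and of both iterated integrals in $(x,y)$, again by dominated convergence with the dominating function $\psi\mapsto\|\widehat f(\cdot,\psi)\|_{L^1(\nu_1)}$ which is $\nu_2$-integrable, promote the almost-everywhere identities to the claimed pointwise identities on $G_1\times G_2$. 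The delicate step of the proof is the spreading argument in (ii)$\Rightarrow$(i): because $F_\psi$ is not a priori continuous and $\supp(\nu_i)$ may be strictly smaller than $Z_i$, positivity of $\widehat f$ must first be established only on a measure-theoretically conull slice and then extended to the whole support using continuity of $\widehat f$.
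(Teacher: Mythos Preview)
Your proof is correct and follows essentially the same route as the paper: reduce to the Integrable Bochner Theorem on the product pair, use Fubini to pass between $\widehat f(\varphi,\psi)$ and $\mathcal F_{G_1}F_\psi(\varphi)$, define $p_{G_1}[f,\psi]$ by the $\nu_1$-integral and identify it with $F_\psi$ via the Inversion Theorem, and for (ii)$\Rightarrow$(i) exploit continuity of $\widehat f$ to spread nonnegativity from a $\nu_2$-conull slice to all of $\supp(\nu_1)\times\supp(\nu_2)$. The only cosmetic difference is that the paper obtains \eqref{eq:fxy} pointwise directly from the second clause of Theorem~\ref{thm:God-int} (which already gives the inversion formula everywhere for continuous $f$), whereas you first get it a.e.\ from Theorem~\ref{thm:InvTh} and then upgrade via your dominated-convergence continuity argument; both are fine.
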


\begin{proof} Since the pairs $(G_1,K_1)$ and $(G_2,K_2)$ appear symmetrically, it is enough to prove that (i) is equivalent to (ii).

Suppose first that $f\in\mathcal P(G_1\times G_2)$. By the Integrable Bochner Theorem~\ref{thm:God-int} we know that $\mathcal F_{G_1\times G_2}f(\varphi,\psi)\ge 0$ for
$\varphi\in \supp(\nu_1),\psi\in \supp(\nu_2)$ and $\mathcal F_{G_1\times G_2} f\in L^1(\nu_1\otimes\nu_2)$. Furthermore,
\begin{equation}\label{eq:22}
f(x,y)=\int_{Z_1\times Z_2} \varphi(x)\psi(y)\mathcal F_{G_1\times G_2} f(\varphi,\psi)d\nu_1\otimes\nu_2(\varphi,\psi), \quad (x,y)\in G_1\times G_2.
\end{equation}
We also have
\begin{eqnarray}\label{eq:fub1}
\mathcal F_{G_1\times G_2} f(\varphi,\psi)&=&\int_{G_1\times G_2}\overline{\varphi(x)}\,\overline{\psi(y)} f(x,y) d\om_{G_1}\otimes\om_{G_2}(x,y)\nonumber\\
&=&\int_{G_1'}\left(\overline{\varphi(x)} \int_{G_2} \overline{\psi(y)} f(x,y)\,d\om_{G_2}(y)\right) d\om_{G_1}(x)\nonumber\\
&=&\int_{G_1'} \overline{\varphi(x)} \mathcal F_{G_2}  f(x,\cdot)(\psi) d\om_{G_1}(x),
\end{eqnarray}
where $G_1'$ is as in \eqref{eq:Hnull*}.

Since $\mathcal F_{G_1\times G_2} f\in L^1(\nu_1\otimes\nu_2)$, Fubini's Theorem shows that
\begin{equation}\label{eq:nu1'}
\supp(\nu_1)':=\{\varphi\in\supp(\nu_1) \mid \mathcal F_{G_1\times G_2}f(\varphi,\cdot)\in L^1(\nu_2)\}
\end{equation}
satisfies $\nu_1(\supp(\nu_1)\setminus\supp(\nu_1)')=0$, and similarly
\begin{equation}\label{eq:nu2'}
\supp(\nu_2)':=\{\psi\in\supp(\nu_2) \mid \mathcal F_{G_1\times G_2}f(\cdot,\psi)\in L^1(\nu_1)\}
\end{equation}
satisfies $\nu_2(\supp(\nu_2)\setminus\supp(\nu_2)')=0$.

In particular \eqref{eq:fub1} holds for $\varphi\in\supp(\nu_1), \psi\in\supp(\nu_2)'$, so
  the Inversion Theorem~\ref{thm:InvTh} applied for $\psi\in\supp(\nu_2)'$ gives
\begin{equation}\label{eq:IT}
\mathcal F_{G_2} f(x,\cdot)(\psi)=\int_{Z_1}\varphi(x)\mathcal F_{G_1\times G_2} f(\varphi,\psi) \, d\nu_1(\varphi)
\end{equation}
for almost all $x\in G_1$.  
The right-hand side of this equation as a function of $x$ is a continuous positive definite function  on $G_1$ denoted $p_{G_1}[f,\psi]$, {\em i.e.},
\begin{equation}\label{eq:IT1}
p_{G_1}[f,\psi](x):=\int_{Z_1}\varphi(x)\mathcal F_{G_1\times G_2} f(\varphi,\psi) \, d\nu_1(\varphi),\quad x\in G_1.
\end{equation}

Clearly \eqref{eq:IT1}  is  bi-invariant under $K_1$. Furthermore, $p_{G_1}[f,\psi]\in L^1(\om_{G_1})$ because
\begin{eqnarray*}
&&\int_{G_1}|p_{G_1}[f,\psi](x)|\,d\om_{G_1}(x)=\\
&&\int_{G_1'} |\mathcal F_{G_2} f(x,\cdot)(\psi)| d\om_{G_1}(x)\le \int_{G_1}\left(\int_{G_2} |f(x,y)| d\om_{G_2}(y)\right)d\om_{G_1}(x)<\infty.
\end{eqnarray*}
We can now use Fubini's Theorem and \eqref{eq:IT1} to expand \eqref{eq:22} to
\begin{eqnarray*}
f(x,y)&=&\int_{\supp(\nu_2)'}\psi(y)\left(\int_{\supp(\nu_1)}\varphi(x)\mathcal F_{G_1\times G_2}f(\varphi,\psi)d\nu_1(\varphi)\right) d\nu_2(\psi)\\
&=&\int_{\supp(\nu_2)'}\psi(y)p_{G_1}[f,\psi](x)\, d\nu_2(\psi),
\end{eqnarray*}
which is the second equality in \eqref{eq:fxy}.

\medskip
Assume next that (ii) holds, {\em i.e.}, for almost all $\psi\in\supp(\nu_2)$ the function  $x\mapsto \mathcal F_{G_2}f(x,\cdot)(\psi)$ is equal for almost all $x\in G_1$
to a function $p_{G_1}[f,\psi]\in\mathcal P_{K_1}^\sharp(G_1)$. As before the latter is integrable on $G_1$, so by the Integrable Bochner Theorem~\ref{thm:God-int}
$$
\mathcal F_{G_1}(p_{G_1}[f,\psi])(\varphi)=\int_{G_1} \overline{\varphi(x)}p_{G_1}[f,\psi](x)\,d\om_{G_1}(x) \ge 0,\quad \varphi\in \supp(\nu_1),
$$
{\em i.e.},
$$
\int_{G_1'}\overline{\varphi(x)}\left(\int_{G_2}\overline{\psi(y)}f(x,y)\,d\om_{G_2}(y)\right)\,d\om_{G_1}(x)=
\mathcal F_{G_1\times G_2}f(\varphi,\psi)\ge 0
$$
for almost all $\psi\in\supp(\nu_2)$ and all $\varphi\in\supp(\nu_1)$. Since $\mathcal F_{G_1\times G_2}f$ is continuous on $Z_1\times Z_2$, we get $\mathcal F_{G_1\times G_2}f\ge 0$ on $\supp(\nu_1)\times\supp(\nu_2)$, so $f$ is positive definite by Theorem~\ref{thm:God-int}.
\end{proof}

\begin{rem} {\rm Theorem~\ref{thm:Gn} of Gneiting is a special case of Theorem~\ref{thm:CHG*} for the abelian Gelfand pairs $(G_1,K_1)=(\R^k,\{0\})$ and $(G_2,K_2)=(\R^l,\{0\})$ and the equivalence of (i) and (iii). Theorem~\ref{thm:CHG*} also yields the clarifying details missing in the formulation of Gneiting's Theorem.  
}
\end{rem}

\begin{rem} {\rm Let $f$ satisfy the equivalent conditions of Theorem~\ref{thm:CHG*},
{\em i.e.},
$$
f\in\mathcal  P(G_1\times G_2)\cap L^1(\om_{G_1\times G_2})_{K_1\times K_2}^\sharp.
$$
Then $f(\cdot,e_{G_2}) \in \mathcal P(G_1)_{K_1}^\sharp$. It is possible to construct examples so that $f(\cdot,e_{G_2})\notin L^1(\om_{G_1})$, {\em i.e.}, $e_{G_2}\notin G_2'$.}
\end{rem}

In case the first Gelfand pair in Theorem~\ref{thm:CHG*} is compact, we get the following:

\begin{cor}\label{thm:cor1} Let the assumptions in Theorem~\ref{thm:CHG*} hold, and assume that $G_1$ is compact.

Then $G_2'=G_2$ (defined in \eqref{eq:Hnull*2}) and \eqref{eq:FLh} is defined for all $(y,\varphi)\in G_2\times Z_1$ and is continuous from $G_2\times Z_1$ to $\C$. 

The following conditions are equivalent:

(i) $f\in\mathcal P(G_1\times G_2)$.

(ii)  For almost  all $\psi\in\supp(\nu_2)$   
the function $x\mapsto \mathcal F_{G_2} f(x,\cdot)(\psi)$ (defined for $x\in G_1'$) 
is equal almost everywhere in $G_1$ to a $K_1$ bi-invariant continuous positive definite function denoted $p_{G_1}[f,\psi]$ on $G_1$. 

(iii) $p_{G_2}[f,\varphi](y):=\mathcal F_{G_1}f(\cdot,y)(\varphi),\;y\in G_2$ belongs to $\mathcal P_{K_2}^\sharp(G_2)$ for each $\varphi\in Z_1$.

If the equivalent conditions hold, then $p_{G_2}[f,\varphi] \in L^1(\om_{G_2})$ for each $\varphi\in Z_1$ and for $(x,y)\in G_1\times G_2$
\begin{equation}\label{eq:fxycp}
f(x,y)=\sum_{\varphi\in Z_1}\delta(\varphi)\varphi(x)p_{G_2}[f,\varphi](y)=
\int_{\supp(\nu_2)}\psi(y)p_{G_1}[f,\psi](x)\,d\nu_2(\psi),
\end{equation}
where the sum is absolutely and uniformly convergent and the last integrand is defined for $\nu_2$-almost all $\psi\in\supp(\nu_2)$.
\end{cor}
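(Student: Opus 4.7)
The plan is to derive Corollary~\ref{thm:cor1} from Theorem~\ref{thm:CHG*} by systematically exploiting compactness of $G_1$ to remove the "almost everywhere" qualifiers. First the preliminary claims. Because $G_1$ is compact, $\om_{G_1}$ is a finite measure, and for every $y\in G_2$ the section $f(\cdot,y)$ is continuous on $G_1$, hence bounded, hence in $L^1(\om_{G_1})$, which shows $G_2'=G_2$. Joint continuity of $(y,\varphi)\mapsto \mathcal F_{G_1}f(\cdot,y)(\varphi)$ on $G_2\times Z_1$ then follows from a standard uniform-continuity argument: fix $(y_0,\varphi_0)$, take a compact neighbourhood $K$ of $y_0$, use that $f$ is uniformly continuous on $G_1\times K$, and use that convergence $\varphi_j\to\varphi_0$ in $Z_1$ is uniform on compact subsets of $G_1$, which here is all of $G_1$.

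The equivalence (i)$\Leftrightarrow$(ii) is just the corresponding statement of Theorem~\ref{thm:CHG*}. For (i)$\Leftrightarrow$(iii), I would invoke Remark~\ref{thm:reducedsupp}: since $(G_1,K_1)$ is compact, $Z_1$ is discrete with $\supp(\nu_1)=Z_1$ and $\nu_1(\{\varphi\})=\delta(\varphi)>0$. Thus "$\nu_1$-almost all $\varphi\in\supp(\nu_1)$" in Theorem~\ref{thm:CHG*}(iii) upgrades to every $\varphi\in Z_1$, and the almost-everywhere coincidence of $y\mapsto\mathcal F_{G_1}f(\cdot,y)(\varphi)$ with a continuous positive definite function upgrades to pointwise equality on $G_2$, because we have already shown $y\mapsto\mathcal F_{G_1}f(\cdot,y)(\varphi)$ is continuous on $G_2$ and $\om_{G_2}$ has full support. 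Hence one may take $p_{G_2}[f,\varphi](y):=\mathcal F_{G_1}f(\cdot,y)(\varphi)$ for all $y\in G_2$, as in condition (iii) of the corollary; the converse direction is trivial. Integrability $p_{G_2}[f,\varphi]\in L^1(\om_{G_2})$ carries over verbatim from Theorem~\ref{thm:CHG*}.

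For the expansion \eqref{eq:fxycp}, the second equality is simply the second equality of \eqref{eq:fxy}, while the first comes from rewriting $\int_{Z_1}\varphi(x)p_{G_2}[f,\varphi](y)\,d\nu_1(\varphi)$ as the discrete sum weighted by $\delta(\varphi)$. For absolute and uniform convergence on $G_1\times G_2$, the bounds $|\varphi(x)|\le 1$ and $|p_{G_2}[f,\varphi](y)|\le p_{G_2}[f,\varphi](e_{G_2})$ from \eqref{eq:pdele} reduce the problem to showing $\sum_{\varphi\in Z_1}\delta(\varphi)p_{G_2}[f,\varphi](e_{G_2})<\infty$. This in turn follows from Corollary~\ref{thm:comgp} applied to the continuous $K_1$-bi-invariant positive definite function $f(\cdot,e_{G_2})$, which is automatically in $L^1(\om_{G_1})$ since $G_1$ is compact; its Fourier coefficient at $\varphi$ equals exactly $p_{G_2}[f,\varphi](e_{G_2})$, and evaluating the resulting uniformly convergent expansion at $x=e_{G_1}$ shows the sum is $f(e_{G_1},e_{G_2})$. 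I expect the only real subtlety to be the careful bookkeeping in the two-stage upgrade of "almost all" to "all" in (iii); everything else is a routine specialization.
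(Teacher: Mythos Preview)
Your proposal is correct and follows essentially the same route as the paper's proof. The one simplification you miss is that since $(G_1,K_1)$ is compact, $Z_1$ is discrete, so joint continuity of $(y,\varphi)\mapsto \mathcal F_{G_1}f(\cdot,y)(\varphi)$ reduces to continuity in $y$ for each fixed $\varphi$; the paper then argues this via $f(x,y_j)\to f(x,y)$ uniformly in $x\in G_1$, whereas your uniform-continuity argument (while correct) is unnecessary. Your derivation of absolute and uniform convergence via Corollary~\ref{thm:comgp} applied to $f(\cdot,e_{G_2})$ is exactly what the paper does when it evaluates \eqref{eq:fxycp} at $(e_{G_1},e_{G_2})$.
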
	 

\begin{proof} If $G_1$ is compact, then it is known that $Z_1$ is discrete and $\supp(\nu_1)=Z_1$, cf. 
Remark~\ref{thm:reducedsupp}. For $y\in G_2$, $f(\cdot,y)$ is continuous on $G_1$, and hence integrable so $G_2'=G_2$.   Furthermore, it is enough to verify that \eqref{eq:FLh} is continuous for $y\in G_2$ for each fixed $\varphi\in Z_1$. However, if $y_j\to y \in G_2$, then $f(x,y_j)\to f(x,y)$ uniformly for $x\in G_1$ and the result follows.

The first formula in \eqref{eq:fxycp} holds in particular for $x=e_{G_1}, y=e_{G_2}$ and this shows that the series is absolutely and uniformly convergent.
\end{proof}

In case both Gelfand pairs in Theorem~\ref{thm:CHG*} are compact, we get the following:

\begin{cor}\label{thm:2GPcomp} Let $(G_1,K_1)$ and $(G_2, K_2)$ be compact Gelfand pairs with dual spaces $Z_1$ and $Z_2$, and let $f:G_1\times G_2\to \C$ be a continuous function, which is bi-invariant under $K_1\times K_2$. 

The following conditions are equivalent:

(i) $f\in \mathcal P(G_1\times G_2)$.

(ii) $p_{G_1}[f,\psi](x):=\mathcal F_{G_2}f(x,\cdot)(\psi)\in\mathcal P^\sharp_{K_1}(G_1)$ for all $\psi\in Z_2$. 

(iii)  $p_{G_2}[f,\varphi](y):=\mathcal F_{G_1}f(\cdot,y)(\varphi)\in\mathcal P_{K_2}^\sharp(G_2)$ for all $\varphi\in Z_1$. 

If the equivalent conditions hold, then we have 
\begin{equation}\label{eq:fxycp2}
f(x,y)=\sum_{\varphi\in Z_1}\delta(\varphi)\varphi(x)p_{G_2}[f,\varphi](y)=\sum_{\psi\in Z_2}\delta(\psi)\psi(y)p_{G_1}[f,\psi](x),\quad (x,y)\in G_1\times G_2
\end{equation}
and both epansions are absolutely and uniformly convergent on $G_1\times G_2$. 
\end{cor}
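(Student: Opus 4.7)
The plan is to reduce the statement to Corollary~\ref{thm:cor1} applied twice, once in each direction. Since both $G_1$ and $G_2$ are compact, the Haar measures $\om_{G_1}$ and $\om_{G_2}$ are finite, so any continuous function on $G_1\times G_2$ is bounded and hence in $L^1(\om_{G_1\times G_2})_{K_1\times K_2}^\sharp$; the standing integrability hypothesis of Theorem~\ref{thm:CHG*} is therefore automatic. Moreover, for every $y\in G_2$ the function $f(\cdot,y)$ is continuous on the compact space $G_1$, hence integrable, so $G_2'=G_2$ in the sense of \eqref{eq:Hnull*2}; by symmetry $G_1'=G_1$. Consequently the partial Fourier transforms \eqref{eq:FLg*} and \eqref{eq:FLh} are defined pointwise on all of $G_1\times Z_2$ and $Z_1\times G_2$ respectively, and no Fubini null sets need to be excised in either (ii) or (iii).

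Next, I would invoke Corollary~\ref{thm:cor1} with $(G_1,K_1)$ in the compact slot. Since $\supp(\nu_1)=Z_1$ and $G_2'=G_2$, its condition (iii) reads verbatim as condition (iii) of the present corollary, and its expansion \eqref{eq:fxycp} supplies the first equality in \eqref{eq:fxycp2} together with its absolute uniform convergence. To obtain the equivalence (i)$\Leftrightarrow$(ii), I would apply Corollary~\ref{thm:cor1} a second time with the roles of the two Gelfand pairs interchanged; this is legitimate because Theorem~\ref{thm:CHG*} is formulated symmetrically in $(G_1,K_1)$ and $(G_2,K_2)$, and all hypotheses remain in force after swapping. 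This second application delivers the equivalence (i)$\Leftrightarrow$(ii) and the second equality in \eqref{eq:fxycp2}, with absolute uniform convergence proved by the same trick: evaluating at $(x,y)=(e_{G_1},e_{G_2})$ gives
\[
\sum_{\psi\in Z_2}\delta(\psi)\,p_{G_1}[f,\psi](e_{G_1})=f(e_{G_1},e_{G_2})<\infty,
\]
and by \eqref{eq:pdele} one has $|\psi(y)\,p_{G_1}[f,\psi](x)|\le p_{G_1}[f,\psi](e_{G_1})$, so the Weierstrass $M$-test yields absolute and uniform convergence on $G_1\times G_2$.

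There is no serious obstacle here: the corollary is essentially a clean specialization of Corollary~\ref{thm:cor1}, obtained by doubling the compactness hypothesis so that all null-set subtleties in Theorem~\ref{thm:CHG*} evaporate. The only point requiring a moment's care is verifying that the symmetry exchange is actually admissible, but this is immediate from the symmetric formulation of Theorem~\ref{thm:CHG*} and the fact that both compact Gelfand pairs satisfy $\supp(\nu_i)=Z_i$ by Remark~\ref{thm:reducedsupp}.
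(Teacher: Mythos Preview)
Your proposal is correct and matches the paper's approach: the paper states Corollary~\ref{thm:2GPcomp} without proof, as an immediate consequence of Corollary~\ref{thm:cor1} applied symmetrically once both Gelfand pairs are compact, and your argument spells out precisely that reduction, including the observation that compactness kills the null-set caveats and that $\supp(\nu_i)=Z_i$ by Remark~\ref{thm:reducedsupp}.
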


\subsection{Products of Gelfand pairs and locally compact groups}

Let $(G,K)$ be a Gelfand pair and let
$L$ be an arbitrary locally compact group. The set of continuous functions $f:G\times L\to \C$ which are bi-invariant with respect to $K$ in the first variable is denoted  $C^\sharp_K(G,L)$.
As explained in Section 2 this space can be identified with the set of continuous functions from $(K\backslash  
G/K)\times L$ to $\C$, where $K\backslash  
G/K$ is the set of double cosets. 
Let
$$
\mathcal P_K^\sharp(G,L):=C_K^\sharp(G,L)\cap \mathcal P(G\times L)
$$
denote the set of continuous positive definite functions on $G\times L$ which are bi-invariant with respect to $K$ in the first variable.

We now extend  Proposition~\ref{thm:CH} by Replacing $\R^d$ by an arbitrary Gelfand pair $(G,K)$ and $\R$ by an arbitrary  locally compact group $L$.

\begin{prop}\label{thm:CHgp} Let $(G,K)$ be a Gelfand pair with dual space $Z$ and  Plancherel measure $\nu$, and let $L$ be a locally compact group.
Let $h:\supp(\nu)\times L\to \C$ be a continuous function satisfying

(i) For each $\varphi\in\supp(\nu)$ we have
$h(\varphi,\cdot)\in\mathcal P(L)$.

(ii) $\int_{Z} h(\varphi,e_{L}) d\nu(\varphi)<\infty.$

\medskip
Then $C:G\times L\to\C$ defined by
$$
C(x,u)=\int_{Z} \varphi(x) h(\varphi,u) d\nu(\varphi)
$$
belongs to $\mathcal P^\sharp_K(G, L)$.
\end{prop}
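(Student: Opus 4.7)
The plan has four threads: well-definedness, continuity, $K$-bi-invariance in the first slot, and positive definiteness on $G\times L$. The first three will be essentially automatic once one exploits the uniform pointwise bounds $|\varphi(x)|\le 1$ (since $\varphi\in Z$) and $|h(\varphi,u)|\le h(\varphi,e_L)$ (since $h(\varphi,\cdot)$ is positive definite on $L$), which together with hypothesis (ii) supply a $\nu$-integrable dominant for the integrand defining $C$. From there absolute convergence is immediate, continuity of $C$ on $G\times L$ follows from dominated convergence using joint continuity of $(x,u,\varphi)\mapsto\varphi(x)h(\varphi,u)$, and bi-invariance of $C$ in the $G$-slot is inherited from the $K$-bi-invariance of each spherical function $\varphi\in Z$.

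The heart of the argument is positive definiteness on the direct product $G\times L$. Given points $(x_1,u_1),\ldots,(x_n,u_n)\in G\times L$ and scalars $a_1,\ldots,a_n\in\C$, the dominating bound lets me interchange the finite sum with the integral, reducing matters to showing, for each $\varphi\in\supp(\nu)$, the pointwise inequality
\[
\sum_{k,l=1}^n \varphi(x_k^{-1}x_l)\,h(\varphi,u_k^{-1}u_l)\,a_k\overline{a_l}\;\ge\;0.
\]
The matrix in the sum is the Hadamard (entrywise) product of $[\varphi(x_k^{-1}x_l)]_{k,l}$, positive semidefinite because $\varphi$ is a positive definite spherical function, and $[h(\varphi,u_k^{-1}u_l)]_{k,l}$, positive semidefinite by hypothesis (i). Schur's product theorem then gives that this Hadamard product is positive semidefinite, hence the inequality holds, and integrating against $\nu$ yields the required positive definiteness of $C$.

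The main conceptual obstacle is precisely that spherical functions are not characters: in the abelian setting one could write $\gamma(x_k^{-1}x_l)=\overline{\gamma(x_k)}\gamma(x_l)$ and recognize the analogous sum as a squared modulus, but in general no such factorization is available. Schur's product theorem is what replaces the missing homomorphism identity, exploiting only that $\varphi$ is positive definite on $G$ rather than multiplicative. Once this observation is in hand, the proposition follows without appealing to the Inversion Theorem or the Integrable Bochner Theorem; hypothesis (ii) is used only to make the integral converge absolutely.
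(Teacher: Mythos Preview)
Your argument for well-definedness, $K$-bi-invariance, and positive definiteness is exactly the paper's: the author invokes ``stability properties of positive definiteness'' (citing \cite[Proposition~3.2]{B:P:P}) for the fact that $(x,u)\mapsto\varphi(x)h(\varphi,u)\in\mathcal P_K^\sharp(G,L)$, which is your Schur-product observation, and then uses the same dominant $|\varphi(x)h(\varphi,u)|\le h(\varphi,e_L)$ together with (ii).

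The one place your write-up diverges is continuity, and there is a small gap. You appeal to dominated convergence using joint continuity of $(x,u,\varphi)\mapsto\varphi(x)h(\varphi,u)$; but $G\times L$ need not be first countable, so sequential continuity (which is all Lebesgue's theorem gives directly) does not suffice, and DCT fails for general nets. The paper handles this explicitly: it reduces to continuity at $(e_G,e_L)$ (a standard fact for positive definite functions), then for given $\varepsilon>0$ chooses a compact $\Gamma\subset\supp(\nu)$ with $\int_{\supp(\nu)\setminus\Gamma}h(\varphi,e_L)\,d\nu(\varphi)<\varepsilon$, and controls the integral over $\Gamma$ using the joint continuity on the compact set $\Gamma$. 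The author is evidently alert to this issue, remarking later (in the proof of Theorem~\ref{thm:PW-GP}) that dominated convergence would suffice \emph{in the metrizable case}. Your argument is easily repaired along these lines, but as written it does not quite establish continuity in the stated generality.
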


\begin{proof} Note that 
$(x,u)\mapsto \varphi(x)h(\varphi,u)$ belongs to $\mathcal P^\sharp_K(G, L)$ for each $\varphi\in\supp(\nu)$. This follows from
well-known stability properties of positive definiteness, see e.g. \cite[Proposition 3.2]{B:P:P}.
Furthermore, $|\varphi(x)h(\varphi,u)|\le h(\varphi,e_{L})$, so $C$ is well-defined because of (ii), and also positive definite in the sense of \eqref{eq:pdpointnw}.

We shall finally prove the continuity of $C$ and by a classical property of positive definite functions, it is enough to prove continuity at $(e_{G},e_{L})$. For given $\varepsilon>0$ we choose a compact set $\Gamma\subset \supp(\nu)$ such that
$$
\int_{\supp(\nu)\setminus \Gamma} h(\varphi,e_{L}) d\nu(\varphi)<\varepsilon.
$$  
We next choose a neighbourhood $U\times V$ of $(e_{G},e_{L})$ in $G\times L$ such that for $(x,u,\varphi)\in U\times V\times \Gamma$
$$
|\varphi(x)h(\varphi,u)-h(\varphi,e_{L})|\le \frac{\varepsilon}{\nu(\Gamma)},
$$
which is possible  because $\varphi(x) h(\varphi,u)$ is continuous for $(x,u,\varphi)\in G\times L\times Z$.

For $(x,u)\in U \times V$ we then get
\begin{eqnarray*}
C(x,u)-C(e_{G},e_{L})&=&\int_{\supp(\nu)\setminus\Gamma}  \left(\varphi(x) h(\varphi,u)-h(\varphi,e_{L})\right)d\nu(\varphi)\\
&+&\int_{\Gamma} \left(\varphi(x) h(\varphi,u)-h(\varphi,e_{L})\right)d\nu(\varphi).
\end{eqnarray*}
The first  and the second integral are in absolute value bounded by respectively $2\varepsilon$ and $\varepsilon$, hence
$$
|C(x,u)-C(e_{G},e_{L})|\le 3\varepsilon,
$$
and the continuity follows.
\end{proof}

Suppose now that $(G,K)$ is a compact Gelfand pair and $L$ is an arbitrary locally compact group. For $f\in C_K^\sharp(G,L)$ and $u\in L$ the function $f(\cdot,u)$ belongs to $C_K^\sharp(G)\subset L^2(G)_K^\sharp$ and has an expansion
\begin{equation}\label{eq:orex}
f(x,u)\sim\sum_{\varphi\in Z} \delta(\varphi)\mathcal F f(\cdot,u)(\varphi)\varphi(x),
\end{equation}
which converges in $L^2(G)$ because $(\sqrt{\delta(\varphi)} \varphi)_{\varphi\in Z}$ is an orthonormal basis in $L^2(G)_K^\sharp$, cf. \cite[Theorem 2.6]{B:P:P}. The expansion coefficient functions are
\begin{equation}\label{eq:coeffuB}
B(\varphi)(u):= \delta(\varphi)\mathcal F f(\cdot,u)(\varphi)=\delta(\varphi)\int_G\overline{\varphi(x)} f(x,u)\,d\om_G(x),\quad u\in L.
\end{equation}
Clearly $B(\varphi): L\to\C$ is continuous.

The main result, Theorem 3.3 of \cite{B:P:P}, states the following.

\begin{thm}\label{thm:main}[Berg-Peron-Porcu 2018] Let $(G,K)$ denote a compact Gelfand pair, let $L$ be a locally compact group and let  $f:G\times L\to \C$ be a continuous function. Then
$f$ belongs to $\mathcal P_K^\sharp(G,L)$ if and only if the expansion functions $B(\varphi)$ given by \eqref{eq:coeffuB} satisfy

(i) $B(\varphi)\in \mathcal P(L),\;\varphi \in Z$,

(ii) $\sum_{\varphi\in Z} B(\varphi)(e_L)<\infty$.

If the equivalent conditions hold, then we have
\begin{equation}\label{eq:expand}
f(x,u)=\sum_{\varphi\in Z} B(\varphi)(u)\varphi(x),\quad x\in G,u\in L,
\end{equation}
and the sum is absolutely and uniformly convergent for $(x,u)\in G\times L$.
\end{thm}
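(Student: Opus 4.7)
The plan is to split the equivalence into its two implications; the sufficiency direction also automatically produces the expansion \eqref{eq:expand}, while the necessity direction splits further into an easy half (ii) and a harder half (i) that requires a reproducing identity for spherical functions.

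\textbf{Sufficiency.} Assuming (i) and (ii), I set
$$\tilde{f}(x,u) := \sum_{\varphi \in Z} B(\varphi)(u)\,\varphi(x)$$
as the candidate for $f$. Because $B(\varphi) \in \mathcal{P}(L)$ gives $|B(\varphi)(u)| \le B(\varphi)(e_L)$ and $\varphi \in Z \subset \mathcal{P}(G)$ gives $|\varphi(x)| \le \varphi(e_G)=1$, every term is dominated by $B(\varphi)(e_L)$, so (ii) forces absolute and uniform convergence on $G\times L$. Each individual summand $\varphi(x)B(\varphi)(u)$ is continuous and positive definite on $G\times L$ (tensor stability of positive definiteness, already used in \cite[Proposition 3.2]{B:P:P}); thus $\tilde f$ is continuous, positive definite, and $K$-bi-invariant in the first variable. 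To identify $\tilde f$ with $f$, I observe that for each fixed $u \in L$ the displayed series is exactly the Fourier expansion \eqref{eq:orex} of $f(\cdot,u) \in L^2(G)_K^\sharp$ in the orthonormal system $\{\sqrt{\delta(\varphi)}\,\varphi\}_{\varphi\in Z}$, so $\tilde f(\cdot,u)=f(\cdot,u)$ in $L^2(G)$; continuity in $x$ upgrades this to pointwise equality, and (ii) with (i) also gives \eqref{eq:expand} as a byproduct.

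\textbf{Necessity.} Assuming $f \in \mathcal{P}_K^\sharp(G,L)$, condition (ii) is nearly free: the restriction $f(\cdot,e_L)$ belongs to $\mathcal{P}_K^\sharp(G)$, so Corollary~\ref{thm:comgp} gives the absolutely convergent expansion $f(x,e_L)=\sum_\varphi B(\varphi)(e_L)\varphi(x)$, and evaluation at $x=e_G$ produces the finite sum. For (i), I fix $\varphi\in Z$, points $u_1,\dots,u_n\in L$ and scalars $a_1,\dots,a_n\in\C$, and exploit the integrated form of positive definiteness of $f$ on $G\times L$ applied to a test function $g(x,u)=\overline{\varphi(x)}\eta(u)$ with $\eta \in C_c(L)$ chosen to approximate $\sum_j a_j\delta_{u_j}$. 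Passing to the limit via uniform continuity of $f$ on compacts produces the inequality
$$\sum_{j,l} a_j\bar a_l \int_G\int_G \overline{\varphi(x)}\varphi(y)\,f(y^{-1}x,\,u_l^{-1}u_j)\,d\om_G(x)\,d\om_G(y) \ge 0.$$
Substituting $z=y^{-1}x$ and invoking the reproducing identity
$$\int_G \overline{\varphi(x)}\,\varphi(xz)\,d\om_G(x) = \frac{\varphi(z)}{\delta(\varphi)}$$
(a consequence of Schur orthogonality inside the finite-dimensional representation space $H_\varphi$ of \eqref{eq:hfi}) collapses the inner double integral to $\delta(\varphi)^{-2}B(\varphi)(u_l^{-1}u_j)$. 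Clearing the positive factor $\delta(\varphi)^{-2}$ and rearranging indices gives the positive definiteness of $B(\varphi)$, whose continuity follows from compactness of $G$ combined with continuity of $f$.

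\textbf{Main obstacle.} The genuinely non-trivial step is necessity of (i): formulating integrated positive definiteness precisely enough to admit the non-discrete test function $\overline{\varphi(x)}$ on the $G$-factor, and then using the spherical reproducing identity to peel the $G$-integration away and isolate $B(\varphi)$ on $L$. The remaining pieces—sufficiency, condition (ii), the uniform convergence bound, and the identification $f=\tilde f$—are essentially bookkeeping on top of the Bochner--Godement and Plancherel results collected in Section~2.
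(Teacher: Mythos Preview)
The paper does not prove this theorem: it is quoted as Theorem~3.3 of \cite{B:P:P} and merely cited, so there is no in-paper proof to compare against.

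Your argument is correct. The sufficiency half and the derivation of (ii) in the necessity half are exactly as one expects. For necessity of (i), your route via the reproducing identity $\varphi\star\varphi = \varphi/\delta(\varphi)$ (equivalently $\int_G \overline{\varphi(x)}\varphi(xz)\,d\om_G(x)=\varphi(z)/\delta(\varphi)$) is the natural one, and the computation checks out. One small streamlining: approximating $\sum_j a_j\delta_{u_j}$ by $\eta\in C_c(L)$ is unnecessary. Once the integrated positive definiteness of $f$ together with the reproducing identity yields
\[
\int_L\int_L B(\varphi)(v^{-1}u)\,\eta(u)\,\overline{\eta(v)}\,d\om_L(u)\,d\om_L(v)\ \ge\ 0
\]
for every $\eta\in C_c(L)$, and you already know $B(\varphi)$ is continuous, this integrated inequality is itself equivalent to $B(\varphi)\in\mathcal P(L)$ by the very criterion you are invoking on $G\times L$ (cf.\ \cite[p.~256]{D}, as used in the proof of Theorem~\ref{thm:God-int}). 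So you can stop there and skip the limit step.
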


When $L$ denotes the group consisting just of the neutral element, Theorem~\ref{thm:main} reduces to the Bochner-Godement Theorem for compact Gelfand pairs.

If we  apply the Bochner-Godement Theorem to the compact Gelfand pair $(G_1\times G_2,K_1\times K_2)$ of Corollary~\ref{thm:2GPcomp}, we can add the following fourth condition equivalent to 
$f\in\mathcal P(G_1\times G_2)$:
 
\begin{equation}\label{eq:2GPcomp1}
(iv)\quad f(x,y)=\sum_{\varphi\in Z_1}\sum_{\psi\in Z_2} B(\varphi,\psi)\varphi(x)\psi(y),\quad (x,y)\in G_1\times G_2.
\end{equation}
The above expansion is uniformly absolutely convergent and the non-negative numbers
$B(\varphi,\psi)$ are given  by
$$
B(\varphi,\psi)=\delta(\varphi)\delta(\psi)\int_{G_1\times G_2}\overline{\varphi(x)\psi(y)}f(x,y) d\om_{G_1\otimes G_2}(x,y),\quad \varphi\in Z_1,\psi\in Z_2.
$$

When $L$ of Theorem~\ref{thm:main} is abelian, $A=L$, we get the following result: The first part is a reformulation of Corollary~\ref{thm:cor1}, if we  consider $A$ as an abelian Gelfand pair $(A,\{0\})$. The second part gives a sharper result under the condition \eqref{eq:g*}.

\begin{thm}\label{thm:PW-GP} Let $(G,K)$ denote a compact Gelfand pair, let $A$ denote an LCA-group and let $f:G\times A\to \C$ be a continuous function bi-invariant with respect to $K$ in the first variable and integrable with respect to $\om_G\otimes\om_A$.

Then the expansion functions $B(\varphi),\varphi\in Z$ given by \eqref{eq:coeffuB} belong to $L^1(\om_A)$.

Define
\begin{equation}\label{eq:h1*}
\mathcal F_A f(x,\cdot)(\gamma):=\int_A \overline{\gamma(u)} f(x,u)\,d \om_A(u),\quad \gamma\in\widehat{A}, x\in G',
\end{equation}
where as before 
$$
G':=\{x\in G \mid f(x,\cdot)\in L^1(\om_A)\}
$$
is a bi-invariant set such that $\om_A(G\setminus G')=0$. 

The following conditions are equivalent:

(i) $f\in\mathcal P_K^\sharp(G,A)$.

(ii) For almost all $\gamma\in \widehat{A}$ the function $\mathcal F_A f(x,\cdot)(\gamma)$ defined for $x\in G'$ is equal almost everywhere to a function in $\mathcal P_K^\sharp(G)$ denoted $p_G[f,\gamma]$.

(iii) $B(\varphi)\in\mathcal P(A)$ for each $\varphi\in Z$.

If the equivalent conditions hold, then
\begin{equation}\label{eq:fxycpA}
f(x,u)=\sum_{\varphi\in Z} B(\varphi)(u)\varphi(x)=
\int_{\widehat{A}}\gamma(u)p_{G}[f,\gamma](x)\,d\om_{\widehat{A}}(\gamma),\quad (x,u)\in G\times A,
\end{equation}
where the sum is absolutely and uniformly convergent, and  the last integrand is defined for almost all $\gamma\in\widehat{A}$.

\medskip
In case there exists a function $\ell \in L^1(\om_A)$ such that
\begin{equation}\label{eq:g*}
|f(x,u)|\le \ell(u),\quad (x,u) \in G\times A,
\end{equation}
then $G'=G$ and $\mathcal F_A f(x,\cdot)(\gamma)$ given by \eqref{eq:h1*} is continuous on $G\times\widehat{A}$.
In this case condition (ii)  simplifies to

(ii') For all $\gamma\in \widehat{A}$ the function $p_G[f,\gamma](x):= \mathcal F_A f(x,\cdot)(\gamma)$ belongs to $\mathcal P_K^\sharp(G)$.
\end{thm}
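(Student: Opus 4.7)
The strategy is to regard $f$ as a continuous, bi-invariant, integrable function on the product Gelfand pair $(G\times A,\,K\times\{0\})$ and to combine Corollary~\ref{thm:cor1} (with $(G_1,K_1)=(G,K)$ and $(G_2,K_2)=(A,\{0\})$) with the Berg-Peron-Porcu Theorem~\ref{thm:main}. For the abelian pair $(A,\{0\})$ one has $\supp(\nu_2)=Z_2=\widehat{A}$ and $\nu_2=\om_{\widehat{A}}$ by Remark~\ref{thm:reducedsupp}, so the support qualifiers appearing in Corollary~\ref{thm:cor1} vanish. The initial claim $B(\varphi)\in L^1(\om_A)$ is immediate from $|\varphi(x)|\le 1$ and Fubini's theorem, since
$$
\int_A |B(\varphi)(u)|\,d\om_A(u)\le \d(\varphi)\int_{G\times A}|f|\,d(\om_G\otimes\om_A)<\infty.
$$

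Given this, the equivalence (i)$\iff$(ii) is Corollary~\ref{thm:cor1}(i)$\iff$(ii) transcribed into the present notation. For (i)$\iff$(iii), the condition $p_{G_2}[f,\varphi](u)=\mathcal F_G f(\cdot,u)(\varphi)\in\mathcal P(A)$ from Corollary~\ref{thm:cor1}(iii) is equivalent, via the strictly positive factor $\d(\varphi)$, to $B(\varphi)\in\mathcal P(A)$. The two expansions in \eqref{eq:fxycpA} then follow by combining Theorem~\ref{thm:main} (which yields the absolutely and uniformly convergent spherical series) with the abelian specialization $\psi=\gamma$, $\nu_2=\om_{\widehat{A}}$ of the integral formula in \eqref{eq:fxycp}.

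It remains to handle the stronger statements under the domination hypothesis \eqref{eq:g*}. Clearly $|f(x,u)|\le\ell(u)\in L^1(\om_A)$ forces $G'=G$, and joint continuity of $(x,\gamma)\mapsto\mathcal F_A f(x,\cdot)(\gamma)$ on $G\times\widehat{A}$ then follows by dominated convergence: whenever $(x_n,\gamma_n)\to(x,\gamma)$ one has $\overline{\gamma_n(u)}f(x_n,u)\to\overline{\gamma(u)}f(x,u)$ pointwise in $u$ (using continuity of $f$ in its first argument together with the fact that convergence in $\widehat{A}$ implies pointwise convergence on $A$), with integrands uniformly bounded by $\ell$. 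Bi-invariance in $x$ passes automatically from $f$ to $\mathcal F_A f(\cdot,\cdot)(\gamma)$.

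The passage from (ii) to (ii') is the only remaining step requiring real care, and I expect it to be the principal technical point. Condition (ii) supplies a null set $N\subset\widehat{A}$ off which $x\mapsto\mathcal F_A f(x,\cdot)(\gamma)$ coincides $\om_G$-almost everywhere with a continuous positive definite $p_G[f,\gamma]\in\mathcal P_K^\sharp(G)$; since the left-hand side is now itself continuous in $x$, the a.e.\ equality becomes equality everywhere for $\gamma\notin N$. To promote this to every $\gamma\in\widehat{A}$, I would approximate an arbitrary $\gamma$ by a net in $\widehat{A}\setminus N$, which is dense since Haar measure on $\widehat{A}$ is strictly positive on every non-empty open set; joint continuity then yields pointwise convergence of $\mathcal F_A f(\cdot,\cdot)(\gamma_n)$ to $\mathcal F_A f(\cdot,\cdot)(\gamma)$ on $G$, and $\mathcal P_K^\sharp(G)$ is closed under such pointwise limits via the defining inequality \eqref{eq:pdpointnw}, which finishes the argument.
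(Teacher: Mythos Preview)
Your reduction of the main equivalence (i)$\iff$(ii)$\iff$(iii) and the expansion \eqref{eq:fxycpA} to Corollary~\ref{thm:cor1} and Theorem~\ref{thm:main} is exactly what the paper does; the text preceding the theorem even says so verbatim. Your density argument for upgrading (ii) to (ii') is a clean way to fill in a step the paper leaves implicit.

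The one genuine gap is in your continuity argument under \eqref{eq:g*}. You invoke dominated convergence along $(x_n,\gamma_n)\to(x,\gamma)$, but in general neither $G$ nor $\widehat{A}$ need be first countable, so sequential continuity does not imply continuity; and the dominated convergence theorem is a statement about sequences, not nets. The paper is aware of this subtlety: it gives a direct $\varepsilon$--$\delta$ argument (choose a compact $J\subset A$ with $\int_{A\setminus J}\ell<\varepsilon$, then use uniform continuity of $f$ on $G\times J$ and the compact--open topology on $\widehat{A}$), and afterwards remarks parenthetically that \emph{in the metrizable case} your dominated-convergence shortcut suffices. So your argument is correct under a metrizability hypothesis but not in the stated generality; to match the paper you should replace it with an $\varepsilon$--$\delta$ estimate of the above type.
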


\begin{proof}
 It is easy to see from  \eqref{eq:coeffuB} that  $B(\varphi)$ is continuous and integrable on $A$ for each $\varphi\in Z$.

Condition  \eqref{eq:g*} clearly  implies that the expression \eqref{eq:h1*}  is well-defined for $(x,\gamma)\in G\times\widehat{A}$.
 To see the continuity we proceed as follows:

Fix $\varepsilon>0$ and choose a compact set $J\subset A$ such that $\int_{A\setminus J} \ell(u)\,d\om_A(u)<\varepsilon$. For $\delta>0$ to be specified later,  we choose a neighbourhood $V$ of $e_G\in G$ such that for $x,y\in G$ satisfying $x^{-1}y\in V$ we have  $|f(x,u)-f(y,u)|<\delta$ when $u\in J$. This is possible because $f$ is uniformly continuous on $G\times J$. We next consider $\gamma,\chi\in\widehat{A}$ with $\sup_{u\in J}|\gamma(u)-\chi(u)|<\delta$. For such $\gamma,\chi$ and $x,y\in G$ with
$x^{-1}y\in V$ we get 
\begin{eqnarray*}
\mathcal F_Af(x,\cdot)(\gamma)-\mathcal F_Af(y,\cdot)(\chi) &=&
\int_A \left(\overline{\gamma(u)}-\overline{\chi(u)}\right)f(x,u)\,d \om_A(u)\\
&+& \int_A \overline{\chi(u)}\left(f(x,u)-f(y,u)\right)\,d \om_A(u),
\end{eqnarray*}
and hence
\begin{eqnarray*}
\lefteqn{\left|\mathcal F_Af(x,\cdot)(\gamma)-\mathcal F_Af(y,\cdot)(\chi)\right|}\\ 
&\le&\int_A \left|\overline{\gamma(u)}-\overline{\chi(u))}\right| \ell(u)\,d \om_A(u)
+ \int_A |(f(x,u)-f(y,u)|\,d \om_A(u)\\
&\le& \int_J \left|\overline{\gamma(u)}-\overline{\chi(u)}\right| \ell(u)\,d \om_A(u)
+\int_J |(f(x,u)-f(y,u)|\,d \om_A(u)+4\varepsilon\\
&\le& \delta\int_J \ell(u)\,d \om_A(u) +\delta \om_A(J)+4\varepsilon
\le \delta\left(\int \ell d\om_A + \om_A(J)\right) +4\varepsilon
  \le 5\varepsilon,
\end{eqnarray*}
if $\delta$ is specified to be less than $\varepsilon(\int \ell(u) \,d \om_A(u) +\om_A(J))^{-1}$. 

(Notice that in case $\widehat{A}$  and $G$ are metrizable, the continuity of $\mathcal F_Af(x,\cdot)(\gamma)$ follows simply from \eqref{eq:g*} and the Theorem of Lebesgue on dominated convergence.)
\end{proof}

\section{The Porcu-White Theorem}

As background material for this section one can consult \cite[Chap. 9]{F}.

We now consider the compact Gelfand pair $(O(d+1),O(d))$, where $O(d)$ is the compact group of orthogonal $d\times d$ matrices. The homogeneous space $O(d+1)/O(d)$ can be identified with the unit sphere $\S^d$ in $\R^{d+1}$ in the following way.

The compact group $G=O(d+1)$ operates in $\R^{d+1}$ and in $\S^d$. 
We use the notation $e_1,\ldots,e_{d+1}$ for the standard basis in $\R^{d+1}$.
The fixed-point group of the matrices $g\in O(d+1)$  satisfying $ge_{1}=e_{1}$,
is of the form
$$
g=\begin{pmatrix}
1 & 0 \\
0 & \tilde g
\end{pmatrix}, 
$$
where $\tilde g\in O(d)$, the zero in the upper right corner represents a zero row vector of length $d$, and the zero in the lower left corner represents a zero column vector of length $d$.

 This shows that the fixed-point group $K$  of $e_{1}$ is isomorphic to $O(d)$ and in the following identified with $O(d)$. The mapping $g\mapsto ge_1$ of
$G=O(d+1)$ onto $\S^d$ is constant on the left cosets $gK$, and hence induces a bijection of $G/K$ onto $\S^d$, and it is a homeomorphism. 

The mapping $g\mapsto ge_1\cdot e_1$ of $G$ onto $[-1,1]$ is constant on the double cosets, and if $ge_1\cdot e_1=he_1\cdot e_1$ for $g,h\in G$, then they belong to the same double coset. Therefore the space of double cosets $K\backslash G/K$ is homeomorphic to $[-1,1]$. This shows that complex functions on $G$ which are bi-invariant with respect to $K$, can be identified with functions  $f:[-1,1]\to\C$. In fact, for such a function, $g\mapsto f(ge_{1}\cdot e_{1})$ is a bi-invariant function on $G$, and all bi-invariant functions on $G$ have this form. The bi-invariant functions depend only on the upper left corner $g_{1,1}$ of $g\in O(d+1)$.

The surface measure of $\S^d$ is denoted $\omega_d$, and it is of total mass
\begin{equation}\label{eq:mass} 
\sigma_d:=\omega_d(\S^d)=\frac{2\pi^{(d+1)/2}}{\Gamma((d+1)/2)}.
\end{equation}
Furthermore, we have
$$
\int_{-1}^1 (1-x^2)^{d/2-1}\,dx=\sigma_d/\sigma_{d-1}.
$$
In the following we consider the probability measure $\tau_d$ on $[-1,1]$ given by the weight function
\begin{equation}\label{eq:weight-d}
(\sigma_{d-1}/\sigma_d) (1-x^2)^{d/2-1}, \quad -1<x<1.
\end{equation}

The image measure of normalized Haar measure $\omega_G$ on $G=O(d+1)$ under the mapping $g\mapsto ge_1$ of $G$ onto $\S^d$ is the normalized surface measure $\omega_d/\sigma_d$. The image measure of $\omega_G$ under the mapping $g\mapsto ge_1\cdot e_1$ of $G$ onto $[-1,1]$ is the probability measure $\tau_d$.

The positive definite spherical functions are
the normalized Gegenbauer polynomial  $c_n(d,x), n=0,1,\ldots$, given by

 \begin{equation}\label{eq:nor}
c_n(d,x)=C_n^{((d-1)/2)}(x)/C_n^{((d-1)/2)}(1)=\frac{n!}{(d-1)_n}C_n^{((d-1)/2)}(x),
\end{equation}     
where $C_n^{(\lambda)}$ are the classical Gegenbauer polynomials in the notation of \cite{A:A:R}. Furthermore,  $(a)_n:=a(a+1)\cdots (a+n-1)$ is the Pochhammer symbol.

 A spherical harmonic of degree $n$ for $\mathbb S^d$  is  the restriction to $\mathbb S^d$ of a real-valued harmonic homogeneous polynomial in $\mathbb R^{d+1}$ of degree $n$. Together with the zero function, the spherical harmonics of degree $n$  form a finite dimensional vector space denoted $\mathcal H_n(d)$. It is a subspace of 
the space  ${\mathcal C}(\mathbb S^d)$ of continuous functions on $\mathbb S^d$ and can be identified with the space $H_\varphi$ corresponding to the spherical function $\varphi=c_n(d,\cdot)$.  We have
\begin{equation}\label{eq:dim}
\delta(c_n(d,\cdot))=N_n(d):=\dim \mathcal H_n(d)=\frac{(d)_{n-1}}{n!}(2n+d-1),\;n\ge 1,\quad N_0(d)=1.
\end{equation} 

Let now $A$ be an LCA-group and let $f:[-1,1]\times A\to\C$ be a continuous function.
For the spherical function $c_n(d,\cdot)$ the expansion function $B(c_n(d,\cdot))$ from \eqref{eq:coeffuB} with $L=A$ is called the $d$-Schoenberg function of $f$ in \cite{B:P}, where it is denoted $b_{n,d}$. Formula \eqref{eq:coeffuB} can be reduced to
\begin{equation}\label{eq:dSch}
b_{n,d}(u)=N_n(d)\int_{-1}^1 f(x,u)c_n(d,x) d\tau_d(x),\quad u\in A.
\end{equation}

Note that using the terminology of \cite{B:P} we have 
$$
\mathcal P_{O(d)}^\sharp(O(d+1),A)=\mathcal P(\S^d,A),
$$
and functions $f$ from these spaces can be considered as functions $f:O(d+1)\times A\to \C$  which are bi-invariant with respect to $O(d)$ in the first variable or as functions $f:[-1,1]\times A\to \C$. Similarly 
$$
\mathcal P_{O(d)}^\sharp(O(d+1))=\mathcal P(\S^d),
$$
and functions $f$ from these spaces can be considered as functions $f:O(d+1)\to \C$  which are bi-invariant with respect to $O(d)$  or as functions $f:[-1,1]\to \C$.

Specializing Theorem~\ref{thm:PW-GP} to the compact Gelfand pair $(O(d+1),O(d))$ we get:

\begin{thm}\label{thm:PW} Let $A$ be an LCA-group and let $f:[-1,1]\times A\to\C$ be a continuous function, assumed integrable with respect to the product measure $\tau_d\otimes \om_A$.  The $d$-Schoenberg functions $b_{n,d}$ given by \eqref{eq:dSch} belong to $L^1(\om_A)$. Let $N\subset[-1,1]$ denote the $\tau_d$-null set such that $f(x,\cdot)\in L^1(\om_A)$ for $x\in [-1,1]\setminus N$ and define
\begin{equation}\label{eq:h1}
\mathcal F_A f(x,\cdot)(\gamma):=\int_A \overline{\gamma(u)} f(x,u)\,d \om_A(u),\quad \gamma\in\widehat{A}, x \in [-1,1]\setminus N.
\end{equation}

The following conditions are equivalent:

(i) $f\in\mathcal P(\S^d,A)$.

(ii) For almost all $\gamma\in \widehat{A}$ the function $\mathcal F_A f(x,\cdot)(\gamma)$ defined  for $x\in[-1,1]\setminus N$ is equal $\tau_d$-almost everywhere to a function $p[f,\gamma]\in \mathcal P(\S^d)$.

(iii) $b_{n,d}\in\mathcal P(A)$ for each $n\ge 0$.

If the equivalent conditions hold, then for $(x,u)\in [-1,1]\times A$
\begin{equation}\label{eq:fxu}
f(x,u)=\sum_{n=0}^\infty b_{n,d}(u)c_n(d,x)= \int_{\widehat{A}}\gamma(u) p[f,\gamma](x)\,d\om_{\widehat{A}}(\gamma),
\end{equation}
where the sum is absolutely and uniformly convergent, and the last integrand is defined for almost all $\gamma \in\widehat{A}$.

\medskip
In case there exists a function
$\ell\in L^1(\om_A)$ such that  
\begin{equation}\label{eq:g}
|f(x,u)|\le \ell(u),\quad x\in[-1,1], u\in A,
\end{equation}
then $N=\emptyset$ and $\mathcal F_A f(x,\cdot)(\gamma)$ given by \eqref{eq:h1} 
is continuous on $[-1,1]\times\widehat{A}$. The $\om_{\widehat{A}}$ null-set in (ii) can be chosen as the empty set.
\end{thm}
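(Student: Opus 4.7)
The plan is to specialize Theorem~\ref{thm:PW-GP} to the compact Gelfand pair $(G,K)=(O(d+1),O(d))$, with the dictionary established in the first part of this section: $K$-bi-invariant continuous functions on $G$ correspond bijectively to continuous functions on $[-1,1]$ via $g\mapsto ge_1\cdot e_1$, and the push-forward of $\om_G$ under this map is $\tau_d$. Under this correspondence, the dual space $Z$ is the countable set of normalized Gegenbauer polynomials $\{c_n(d,\cdot):n\ge 0\}$, and $\delta(c_n(d,\cdot))=N_n(d)$ by \eqref{eq:dim}. Moreover, the expansion coefficient functions $B(\varphi)(u)$ of \eqref{eq:coeffuB} translate, via the change-of-variables from $\om_G$ to $\tau_d$ and the fact that $c_n(d,\cdot)$ is real-valued, into exactly the $d$-Schoenberg functions $b_{n,d}(u)$ as given in \eqref{eq:dSch}.

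With this dictionary in hand, I would simply read off each assertion from Theorem~\ref{thm:PW-GP}. The $L^1(\om_A)$-integrability of each $b_{n,d}$ is the first conclusion of the general theorem. The set $G'=\{x\in G: f(x,\cdot)\in L^1(\om_A)\}$ is $K$-bi-invariant and satisfies $\om_G(G\setminus G')=0$, hence corresponds under the quotient $G\to[-1,1]$ to a set of the form $[-1,1]\setminus N$ with $\tau_d(N)=0$; this matches the definition of $N$ in the statement. The conventions $\mathcal P_{O(d)}^\sharp(O(d+1),A)=\mathcal P(\S^d,A)$ and $\mathcal P_{O(d)}^\sharp(O(d+1))=\mathcal P(\S^d)$ recorded just before the theorem then turn conditions (i), (ii), (iii) of Theorem~\ref{thm:PW-GP} into conditions (i), (ii), (iii) of Theorem~\ref{thm:PW}, so the three-way equivalence is immediate.

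The expansion \eqref{eq:fxu} is similarly obtained by transporting \eqref{eq:fxycpA}: the sum over $\varphi\in Z$ becomes $\sum_{n=0}^\infty b_{n,d}(u)c_n(d,x)$, and the integral over $\widehat{A}$ carries over verbatim (with $p[f,\gamma]$ the function on $\S^d$ corresponding to $p_G[f,\gamma]$). Absolute and uniform convergence of the series is inherited from Theorem~\ref{thm:PW-GP}. For the last part, a dominating $\ell\in L^1(\om_A)$ satisfying \eqref{eq:g} gives a dominating function on $G\times A$ satisfying \eqref{eq:g*} (using that the map $g\mapsto ge_1\cdot e_1$ is surjective onto $[-1,1]$), so the final paragraph of Theorem~\ref{thm:PW-GP} yields $G'=G$, equivalently $N=\emptyset$, continuity of $\mathcal F_Af(x,\cdot)(\gamma)$ on $G\times\widehat{A}$ transferred to $[-1,1]\times\widehat{A}$, and the replacement of (ii) by the pointwise version (ii').

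There is no real obstacle; the entire proof is a translation exercise. The one point that deserves a line of verification is that the formula \eqref{eq:coeffuB} for $B(\varphi_n)(u)$, which integrates over $O(d+1)$ against $\om_G$, genuinely reduces to \eqref{eq:dSch}, which integrates over $[-1,1]$ against $\tau_d$. This follows from the fact that $x\mapsto \overline{c_n(d,x)}f(x,u)$ is $K$-bi-invariant in the $G$-variable together with the identity $(g\mapsto ge_1\cdot e_1)_*\om_G=\tau_d$ recorded earlier in the section.
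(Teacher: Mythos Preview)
Your proposal is correct and matches the paper's approach exactly: the paper simply states that Theorem~\ref{thm:PW} is obtained by specializing Theorem~\ref{thm:PW-GP} to the compact Gelfand pair $(O(d+1),O(d))$ and gives no further proof. Your write-up is in fact more detailed than the paper's, spelling out the dictionary (bi-invariant functions $\leftrightarrow$ functions on $[-1,1]$, $\om_G\to\tau_d$, $B(\varphi_n)\to b_{n,d}$, $G'\to[-1,1]\setminus N$) that the paper leaves implicit.
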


\begin{rem} {\rm In \cite[Theorem 1]{P:W} Porcu and White proved Theorem~\ref{thm:PW} in the special case of $A=\R$ and with a special function $\ell$ satisfying \eqref{eq:g}.

They consider the function
\begin{equation}\label{eq:bd}
\ell(u)=B_d(u):=\sum_{n=0}^\infty |b_{n,d}(u)|,\quad u\in A,
\end{equation}
and they assume $B_d\in L^1(\om_A)$.

Let us prove that 
\begin{equation}\label{eq:in1}
|f(x,u)|\le B_d(u),\quad (x,u)\in [-1,1]\times A.
\end{equation}

In fact for fixed $u\in A$ the continuous function $f(\cdot,u)$ has the expansion
$$
\sum_{n=0}^\infty b_{n,d}(u) c_n(d,x)
$$ 
in terms of the orthogonal polynomials $c_n(d,x)$, and the series converges in $L^2(\tau_d)$. By a classical theorem there exists a sequence $n_j=n_j(u), j=1,2,\ldots$ of natural numbers tending to infinity
such that
$$
\lim_{j\to\infty}\sum_{n=0}^{n_j} b_{n,d}(u) c_n(d,x)=f(x,u)
$$
for almost all $x\in[-1,1]$, and hence for those $x$
\begin{eqnarray*}
|f(x,u)|&=&\lim_{j\to\infty}\left|\sum_{n=0}^{n_j} b_{n,d}(u) c_n(d,x)\right|\\
         &\le& \lim_{j\to\infty}\sum_{n=0}^{n_j} |b_{n,d}(u)|=B_d(u).
\end{eqnarray*}
By continuity in $x$ we then get \eqref{eq:in1}. 
}
\end{rem}

In \cite{P:C:G:W:A} the authors consider positive definite functions on products of  spheres $\S^{d_1}$ and $\S^{d_2}$. Their Theorem B.1 is a special case of Corollary~\ref{thm:2GPcomp} applied to the product of the two compact Gelfand pairs
$(O(d_i+1),O(d_i)), i=1,2$.

\section{Revisiting a Theorem of Gneiting}  

In Schoenberg's fundamental paper \cite{Sc} there is a characterization of the class
$$
\mathcal  P(\S^\infty):=\cap_{d=1}^\infty \mathcal P(\S^d)
$$
as the power series
\begin{equation}\label{eq:Schinf}
f(x)=\sum_{n=0}^\infty b_n x^n,\quad x\in [-1,1]
\end{equation}
where $b_n\ge 0$ and $f(1)=\sum b_n<\infty$.

Gneiting \cite{Gn1} introduced the class $\Psi_\infty$ of continuous functions $\psi:[0,\pi]\to\R$ of the form $\psi(\theta)=f(\cos(\theta))$ with $f\in\mathcal P(\S^\infty)$ and $f(1)=1$. 

Theorem 7 in \cite{Gn1} states the following:
\begin{thm}[Gneiting]\label{thm:Gne} Let $\rho:[0,\infty)\to\R$ be a completely monotonic function with $\rho(0)=1$. Then the restriction of $\rho$ to $[0,\pi]$ belongs to $\Psi_\infty$.
\end{thm}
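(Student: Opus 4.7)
The plan is to combine Bernstein's theorem on completely monotonic functions with the classical fact that $x\mapsto e^{s\arcsin x}$ has a Maclaurin series with non-negative coefficients for every $s\ge 0$. By Bernstein's theorem, the completely monotonic $\rho$ with $\rho(0)=1$ admits the Laplace representation
\begin{equation*}
\rho(t)=\int_0^\infty e^{-st}\,d\mu(s),\quad t\ge 0,
\end{equation*}
where $\mu$ is a probability measure on $[0,\infty)$. Using $\arccos x=\pi/2-\arcsin x$ for $x\in[-1,1]$, this gives
\begin{equation*}
\rho(\arccos x)=\int_0^\infty e^{-s\pi/2}\,e^{s\arcsin x}\,d\mu(s).
\end{equation*}
The strategy is therefore to expand $e^{s\arcsin x}$ as a power series in $x$ with coefficients $a_n(s)\ge 0$ that are polynomials in $s$, and then to integrate term by term against $e^{-s\pi/2}\,d\mu(s)$.

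First I would set $h_s(x):=e^{s\arcsin x}$. Squaring $\sqrt{1-x^2}\,h_s'=s\,h_s$ and differentiating yields the linear second-order ODE
\begin{equation*}
(1-x^2)\,h_s''(x)-x\,h_s'(x)-s^2\,h_s(x)=0,
\end{equation*}
with initial data $h_s(0)=1$ and $h_s'(0)=s$. Substituting $h_s(x)=\sum_{n\ge 0}a_n(s)x^n$ produces the two-term recurrence
\begin{equation*}
(n+2)(n+1)\,a_{n+2}(s)=(n^2+s^2)\,a_n(s),
\end{equation*}
starting from $a_0(s)=1$ and $a_1(s)=s$. Induction shows that each $a_n(s)$ is a polynomial in $s$ with non-negative coefficients, hence $a_n(s)\ge 0$ for $s\ge 0$. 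Writing $P_n(s):=n!\,a_n(s)$ and applying Fa\`a di Bruno to $h_s=\exp(s\arcsin(\cdot))$ converts these polynomials into the announced exponential Bell partition polynomial form, giving the explicit coefficient formulas promised in the introduction.

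Next I would justify the interchange of sum and integral. For $x\in[0,1)$ every summand is non-negative, so Tonelli's theorem yields
\begin{equation*}
\rho(\arccos x)=\sum_{n=0}^\infty b_n\,x^n,\qquad b_n:=\int_0^\infty e^{-s\pi/2}a_n(s)\,d\mu(s)\ge 0.
\end{equation*}
For $x\in(-1,0)$ the pointwise estimate $\sum_n a_n(s)|x|^n=e^{s\arcsin|x|}$ combined with $\int e^{-s\pi/2}e^{s\arcsin|x|}\,d\mu(s)=\rho(\arccos|x|)<\infty$ supplies the dominating integrable envelope needed for Fubini's theorem, so the same identity holds on all of $[-1,1]$. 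Letting $x\to 1^-$ and invoking monotone convergence (equivalently Abel's theorem for non-negative coefficients) gives $\sum_n b_n=\rho(0)=1$, so $f(x):=\sum_n b_n\,x^n$ belongs to $\mathcal P(\S^\infty)$ with $f(1)=1$, i.e.\ $\rho|_{[0,\pi]}\in\Psi_\infty$.

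The main obstacle I anticipate is not the Fubini interchange or the ODE manipulation, both of which are routine, but rather turning the bare recurrence $(n+2)(n+1)a_{n+2}=(n^2+s^2)a_n$ into the clean closed-form Bell partition polynomial expression for $P_n(s)$; that identity is what distinguishes the proposed argument from Gneiting's original proof and must be worked out carefully.
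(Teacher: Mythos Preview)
Your proposal is correct and follows the same overall architecture as the paper: Bernstein's representation reduces the claim to showing that $e^{s\arcsin x}$ has non-negative Maclaurin coefficients for each $s\ge 0$, after which term-by-term integration against $e^{-s\pi/2}\,d\mu(s)$ yields the Schoenberg expansion.

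The one genuine difference lies in how you establish the non-negativity of the coefficients $a_n(s)$. The paper simply observes that $\arcsin x=\sum_{n\ge 0}\frac{(1/2)_n}{n!}\frac{x^{2n+1}}{2n+1}$ already has non-negative coefficients, so $\exp(s\arcsin x)$ does too---a one-line argument---and only afterwards invokes exponential Bell polynomials $B_n$ to \emph{name} the resulting coefficients $r_n(s)=n!\,a_n(s)$, obtaining a multi-term recursion. You instead derive the ODE $(1-x^2)h''-xh'-s^2h=0$ and the two-term recurrence $(n+2)(n+1)a_{n+2}=(n^2+s^2)a_n$, which not only gives non-negativity by induction but immediately yields the closed product forms
\[
a_{2k}(s)=\frac{\prod_{j=0}^{k-1}\bigl(s^2+(2j)^2\bigr)}{(2k)!},\qquad
a_{2k+1}(s)=\frac{\prod_{j=0}^{k-1}\bigl(s^2+(2j+1)^2\bigr)}{(2k+1)!}\cdot s.
\]
Thus your worry about ``turning the recurrence into the Bell polynomial expression'' is misplaced: the paper's Bell polynomials are just the generic Fa\`a di Bruno output applied to $\exp\circ(s\arcsin)$, and your product formula is in fact a \emph{sharper} description of the same quantities, not something you still need to derive. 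The paper's route is shorter for bare non-negativity; yours gives more explicit coefficients with less combinatorial machinery.
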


\begin{rem} {\rm The statement of Gneiting's Theorem is actually that the restriction to $[0,\pi]$ of a non-constant completely monotonic function belongs to $\Psi_\infty^+$, where the plus sign refers to the positive definiteness being strict. We shall not consider this aspect and therefore the assumption of being non-constant is not important. The purpose of this section is  to give a new proof of Theorem~\ref{thm:Gne} and the new proof has the advantage that we obtain information about the power series coefficients $b_n$ from \eqref{eq:Schinf}.
}
\end{rem}

\begin{rem} {\rm Schoenberg \cite{Sc} proved that
$$
\lim_{d\to\infty} c_n(d,x)=x^n,\quad -1<x<1,
$$
where $c_n(d,x)$ are the spherical functions for $(O(d+1),O(d))$ from Eq. \eqref{eq:nor}. This has been generalized to inductive limits of suitable sequences $(G_n,K_n)$ of Gelfand pairs, see
\cite{D:O:W} and references therein. 
}
\end{rem}

{\it Proof of Theorem~\ref{thm:Gne}:}
 
By a theorem of Bernstein, cf. \cite[p. 160]{Wi}, the functions of Theorem~\ref{thm:Gne} have the form
\begin{equation}\label{eq:Gn2}
\rho(\theta)=\int_0^\infty \exp(-a\theta)\,d\mu(a),
\end{equation}
where $\mu$ is a probability measure on $[0,\infty)$. Because of stability properties of the set $\Psi_\infty$, it is enough to prove that $\exp(-a\theta)\in\Psi_\infty$ for each $a>0$.

If we let $\Arccos:[-1,1]\to [0,\pi]$ denote the inverse of $\cos:[0,\pi]\to [-1,1]$, we have to prove that $\exp(-a\Arccos x)$ has non-negative power series coefficients.

We first notice that
\begin{equation}\label{eq:arc}
\Arccos(x)=\pi/2-\sum_{n=0}^\infty \frac{(1/2)_n}{n!}\frac{x^{2n+1}}{2n+1},\quad |x|\le 1.
\end{equation}
 To see \eqref{eq:arc}, we use that the derivative of $\Arccos(x)$ is $-(1-x^2)^{-1/2}$ and
$$
(1-x^2)^{-1/2}=\sum_{n=0}^\infty\binom{-1/2}{n} (-x^2)^n,\quad |x|<1.
$$

For any $a>0$ we then have
\begin{eqnarray*}
\exp(-a\Arccos(x)) &=& e^{-a\pi/2}\exp\left(a\sum_{n=0}^\infty \frac{(1/2)_n}{n!}\frac{x^{2n+1}}{2n+1}\right)\\
&=&
e^{-a\pi/2}\sum_{n=0}^\infty \frac{r_n(a)}{n!}x^n,\quad |x|\le 1,
\end{eqnarray*}
where $r_n(a)>0$ for all $n$. $\square$

\medskip
 Of course the expressions for $r_n(a)$ are complicated, but 
 using the exponential Bell partition  polynomials $B_n$ we can find expressions for the coefficients $r_n(a)$. From \cite[Section 11.2]{Ch} we have
$$
\exp\left(\sum_{k=1}^\infty \frac{a_k}{k!}x^k\right)=\sum_{n=0}^\infty \frac{B_n(a_1,\ldots,a_n)}{n!}x^n.
$$
 It is known that 
$$
B_0=1,\quad B_1(a_1)=a_1,\quad B_2(a_1,a_2)=a_1^2+a_2,
$$
and in general we have the recursion formula
$$
B_{n+1}(a_1,\ldots,a_{n+1})=\sum_{k=0}^n\binom{n}{k}B_{n-k}(a_1,\ldots,a_{n-k})a_{k+1}.
$$
We now use 
$$
a_{2n+1}=a\frac{(1/2)_n(2n)!}{n!}=a((2n-1)!!)^2,\quad a_{2n}=0,
$$
($(2k-1)!!:=1\cdot 3\cdot 5\cdots (2k-1)$) and define $r_n(a):=B_n(a_1,\ldots,a_n)$. We see that the recursion becomes
\begin{equation}
r_{n+1}(a)=a\sum_{k=0}^{[n/2]}\binom{n}{2k} r_{n-2k}(a) ((2k-1)!!)^2,
\end{equation}
so $r_n(a)$ is a monic polynomial in $a$ of degree $n$ with non-negative coefficients. The first polynomials are given by
$$
r_0(a)=1,\quad r_1(a)=a,\quad r_2(a)=a^2, \quad r_3(a)=a^3+a,\quad r_4(a)=a^4+4a^2
$$ 
$$
r_5(a)=a^5+10a^3+9a,\quad r_6(a)=a^6 + 20 a^4+64 a^2,\quad r_7(a)=a^7+35a^5+259a^3+225a.
$$
For the completely monotonic normalized function $\rho$ given by \eqref{eq:Gn2} we find
$$
\rho(\theta)=\sum_{n=0}^\infty \frac{c_n}{n!}\cos^n(\theta),\quad c_n=\int_0^\infty e^{-a\pi/2} r_n(a)\,d\mu(a). 
$$

\section{Appendix} We shall give an example showing that the functions  $C(\mathbf{h};u)$ constructed in Proposition~\ref{thm:CH} need not be integrable.

 In the following $\mathcal F f$ denotes the Fourier transform of a function $f:\R\to \C$ given by
$$
\mathcal F f(t)=\Int e^{-itx}f(x)\,dx,\quad t\in\R.
$$
Let $C_0(]-1,1[)$ denote the set of continuous functions $f:]-1,1[\to\C$ vanishing at "infinity", {\em i.e.}, 
$$
\lim_{x\to -1}f(x)=\lim_{x\to 1} f(x)=0.
$$
It is a Banach space under the uniform norm
$$
||f||=\sup\{|f(x)|\mid -1<x<1\}.
$$

 We proceed in a number of steps.
 
{\bf 1:} {\it There exists $f\in C_0(]-1,1[)$ such that $\mathcal F f\notin L^1(\R)$}. 

This is a classical application of the Banach-Steinhaus Theorem to the continuous linear functionals on the Banach space $C_0(]-1,1[)$:
$$
L_n(f)=\int_{-n\pi}^{n\pi} \mathcal F f(t)dt=\int_{-1}^1f(x)\frac{2\sin(n\pi x)}{x}dx,\quad f\in C_0(]-1,1[).
$$
In fact, assuming that $L_n, n\ge 0$ is pointwise bounded, we get that $||L_n||$ is bounded, which  is a contradiction because
$$
||L_n||=\int_{-1}^1\left|\frac{2\sin(n\pi x)}{x}\right|dx=4\int_0^{n\pi}\frac{|\sin(u)|}{u}du
$$
tends to infinity for $n\to\infty$. This shows the existence of an $f\in C_0(]-1,1[)$ such that $(L_n(f))$ is an unbounded sequence, and in particular $\mathcal F f\notin L^1(\R)$.

\medskip
{\bf 2:}  {\it There exists $f\in C_0^+(]-1,1[)$ with $\max f=1$ such that $\mathcal F f\notin L^1(\R)$}. 
 
This is an easy consequence of {\bf 1}.

\medskip
{\bf 3:} {\it There exists $f\in C_c^+(\R)$ with $\max f=1$ and $f(x)>0$ for $x\in[0,1]$ such that $\mathcal F f\notin L^1(\R)$}.  

For $f$ as in {\bf 2} let $x_0\in \R$ satisfy $f(x_0)=1$. Then  there exists $\delta>0$ such that $f(x)>0$ for $x\in[x_0,x_0+\delta]$, and
$x\mapsto f(\delta x+x_0)$ satisfies {\bf 3}.

\medskip
{\bf 4:} {\it There exists $f\in C(\R)\cap L^1(\R)$ such that $0<f(x)<1$ for all $x\in\R$ and $\mathcal F f\notin L^1(\R)$}. 

Let $h$ have the properties of {\bf 3} and let $a_n>0, n\in\Z$ be such that $a_0=1/2$, $\sum_{n\in\Z,n\neq 0} a_n=1/4$.
Then
$$
f(x)=\sum_{n\in\Z} a_n h(x-n),\quad x\in\R 
$$
is continuous and has the properties $0<f(x)\le 3/4$ for $x\in\R$ and $\int f(x)dx=3/4\int h(x)dx<\infty$. Furthermore,
$$
\mathcal F f(t)=\mathcal F h(t)\sum_{n\in\Z}a_n e^{-itn},
$$
hence
$$
|\mathcal F f(t)|=|\mathcal F h(t)|\left|\sum_{n\in\Z}a_n e^{-itn}\right| \ge |\mathcal F h(t)|\left(a_0-\sum_{n\in\Z,n\neq 0}a_n\right)=1/4|\mathcal F h(t)|,
$$
showing that $\mathcal F f\notin L^1(\R)$.

{\bf Conclusion:} Define 
$$
g(\om;\tau)=\frac{1}{\sqrt{2\pi}}f(\om)e^{-\tau^2/2},\quad (\om,\tau)\in\R\times\R,
$$
where $f$ satisfies {\bf 4}. Then $g$ is strictly positive, continuous and integrable and
$$
h(\om;u):=\Int g(\om;\tau)e^{iu\tau}d\tau=f(\om)e^{-u^2/2}
$$
satisfies (C1') and (C2') of Proposition~\ref{thm:CH} with $d=1$ and
$$
C(h;u)=\Int e^{ih\om}h(\om;u)d\om=e^{-u^2/2}\mathcal F f(-h)\notin L^1(\R^2).
$$

\noindent{\bf Acknowledgment} The author wants to thank Emilio Porcu, Zolt{\'a}n Sasv{\'a}ri and Ryszard Szwarc for valuable advice during the preparation of this paper. 
The author also wants to thank two independent referees for valuable comments.

\noindent
Christian Berg\\
Department of Mathematical Sciences, University of Copenhagen\\
Universitetsparken 5, DK-2100, Denmark\\
e-mail: {\tt{berg@math.ku.dk}}

\end{document}